\begin{document}
\begin{sloppypar}

% Title
\title{Toroidal area-preserving parameterizations of genus-one closed surfaces}

% Authors: full names plus addresses.
\author{\MakeUppercase{Marco Sutti}\thanks{Division of Mathematics, Gran Sasso Science Institute, L'Aquila, Italy (\email{marco.sutti@gssi.it}).}\hspace{2mm}\orcidlink{0000-0002-8410-1372} \MakeUppercase{and Mei-Heng Yueh}\thanks{Department of Mathematics, National Taiwan Normal University, Taipei, Taiwan (\email{yue@ntnu.edu.tw}).}\hspace{2mm}\orcidlink{0000-0002-6873-5818}}

\date{\today}

\maketitle

\begin{abstract}

We consider the problem of computing toroidal area-preserving parameterizations of genus-one closed surfaces. We propose four algorithms based on Riemannian geometry: the projected gradient descent method, the projected conjugate gradient method, the Riemannian gradient method, and the Riemannian conjugate gradient method.
Our objective function is based on the stretch energy functional, and the minimization is constrained on a power manifold of ring tori embedded in three-dimensional Euclidean space. Numerical experiments on several mesh models demonstrate the effectiveness of the proposed framework. Finally, we show how to use the proposed algorithms in the context of surface registration and texture mapping applications.

% Include keywords and mathematical subject classification numbers as needed.
\bigskip
\textbf{Key words.} toroidal parameterizations, area-preserving mapping, stretch-energy functional, Riemannian optimization, Riemannian conjugate gradient

\medskip
\textbf{AMS subject classifications.} 68U05, 65K10, 65D18, 65D19

\end{abstract}

\bigskip

\section{Introduction}

In recent years, parameterizations of manifolds have found many applications in computer graphics and medical imaging. While many efficient methods have been developed for computing angle-preserving (i.e., conformal) mappings, computing area-preserving mappings (also called authalic or equiareal) of closed surfaces with nontrivial topology is a topic that has received less attention. 

This work focuses on the computation of toroidal area-preserving parameterizations of genus-one closed surfaces. The focus of our study, the ring torus, is illustrated in Figure~\ref{fig:torus_and_tangent_plane}.

%=========================================
% TIKZ DRAWING: TORUS
%=========================================
\begin{figure}[htbp]
  \centering
  \includegraphics[width=0.55\textwidth]{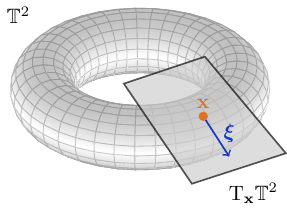}
\caption{A ring torus with a tangent plane at a point $\mathbf{x}$.}
\label{fig:torus_and_tangent_plane}
\end{figure}
%=========================================

Our approach is based on the stretch energy minimization (SEM)~\protect{\cite{Yueh:2020,Yueh:2023}}, which has also been used in the recent work \protect{\cite{Sutti_Yueh:2024}} to compute spherical area-preserving mappings of genus-zero closed surfaces.
Here, the minimization is performed on the power manifold of $n$ ring tori.
The initial torus mapping is computed by minimizing the stretch energy of the mapping in variables of the planar fundamental domain using the fixed-point method \protect{\cite{Yueh:2020}}, which is a modification of the holomorphic differential method introduced in~\protect{\cite{GuYa02}}. During the last fifteen years, many numerical algorithms based on the minimization of the area distortion have been developed to find the area-preserving parameterizations of closed surfaces to a sphere $\Stwo$ or a disk $\mathbb{B}^2$ for a 2-manifold of genus zero \protect{\cite{Zhao:2013,Su:2016,YLWY:2019,CR:2018,Sutti_Yueh:2024}}. 

In contrast, toroidal surfaces remain relatively unexplored. Dey et al.~\protect{\cite{Dey:2013}} proposed an efficient algorithm called \texttt{ReebHanTun} to compute a basis for handle and tunnel loops using the concept of the Reeb graph~\protect{\cite{Dey:2008}}, which provides an initial set of loops that constitute a handle and tunnel basis. Yueh et al.~\protect{\cite{Yueh:2020}} proposed computing a volume-preserving parameterization of genus-one 3-manifolds and area-preserving maps of their boundary. The more recent work by Yao and Choi \protect{\cite{Yao:2026}} also considers toroidal maps for genus-one surfaces but uses a different approach based on density-equalizing.

To the authors' knowledge, this is the first work that targets toroidal area-preserving maps for genus-one closed surfaces using projected and Riemannian optimization methods.

\subsection{Contributions} 
The main contributions of the present work are the following:
\begin{enumerate}
    \item We develop the geometry of a ring torus needed to generalize the existing algorithms to toroidal surfaces.
    \item The Riemannian optimization algorithms minimize the stretch energy to compute the area-preserving mappings between genus-one closed surfaces and a ring torus $\Torus$.
    \item Numerical experiments show the effectiveness and robustness of the proposed algorithms, showing that conjugate gradient algorithms provide better results.
    \item We show how to use the proposed algorithms for applications involving vertebrae registration and texture mappings.
\end{enumerate}

\subsection{Outline of the paper}

The rest of the paper is organized as follows. Section~\ref{sec:simplicial_objective} introduces the main concepts on simplicial surfaces and mappings, and presents the formulation of the objective function. Sect.~\ref{sec:fundamental_domain} gives some background on how to compute the fundamental domain.
Sect.~\ref{sec:riem_optim_and_geometry} briefly introduces the Riemannian optimization framework and explains the geometry of the ring torus and the tools needed to perform optimization on the power manifold of $n$ ring tori.
Sect.~\ref{sec:riem_optim_and_geometry} describes the proposed algorithms. Sect.~\ref{sec:numerical_experiments} discusses the numerical experiments carried out to compare and evaluate our algorithms in terms of accuracy and efficiency. Sect.~\ref{sec:applications} provides concrete applications for the surface registration of two vertebrae and texture mapping.
Finally, we wrap our paper with concluding remarks and future outlook in Sect.~\ref{sec:conclusions}.
Appendix~\ref{sec:line_search} gives more details about the line-search procedure used in our methods.

\subsection{Notation}\label{sec:notation}

In this section, we list the paper's notations and symbols adopted in order of appearance in the paper. Symbols specific to a particular section are usually not included in this list.

%=====================================================
% TABLE: LIST OF SYMBOLS
%=====================================================
\begin{table}[htbp]
   \begin{center}
      \begin{tabular}{ll}
          $ \tau $        &  Triangular face \\
          $ \left\vert \tau \right\vert $  &  Area of the triangle $ \tau $ \\
          $ \cM $         &  Simplicial surface \\
          $ \mathcal{V}(\cM) $ & Set of vertices of $\cM$ \\
          $ \mathcal{F}(\cM) $ & Set of faces of $\cM$ \\
          $ \mathcal{E}(\cM) $ & Set of edges of $\cM$ \\
          $ \bv_{i}, \ \bv_{j}, \ \bv_{k} $      &  Vertices of a triangular face \\
          $ f $           &  Simplicial mapping \\
          $ \f $          &  Representative matrix of $f$ \\
          $ \f_{\ell} $   &  Coordinates of a vertex $ f(\bv_{\ell}) $ \\
          $ \vecop $      &  Column-stacking vectorization operator \\
          $ \Torus $      &  Ring torus in $ \R^{3} $ \\
          $ \powerTorus $ &  Power manifold of $n$ tori in $ \R^{3} $ \\
          $ E_{A}(f) $    &  Authalic energy \\
          $ E_{S}(f) $    &  Stretch energy \\
          $ L_{S}(f) $    &  Weighted Laplacian matrix \\
          $ w_{S} $  &  Modified cotangent weights \\
          $ \cA(f) $      &  Area of the image of $f$ \\
          $ \mathrm{T}_{\x}\Torus $    &  Tangent space to $\Torus$ at $\x$ \\
          $ \Pi_{\Torus} $         & Projection of a point onto $\Torus$ \\
          $ \P_{\x} $    & Orthogonal projector onto the tangent space to $\Torus$ at $\x$ \\
          $ \P_{\mathrm{T}_{\f_{\ell}}\powerTorus} $  & Orthogonal projector onto the tangent space to $\powerTorus$ at $\f_{\ell}$ \\
          $\Retraction$  &  Retraction mapping \\
          $\nabla E(f)$  &  Euclidean gradient of $E(f)$ \\
          $\grad E(f)$   &  Riemannian gradient of $E(f)$
      \end{tabular}
   \end{center}
\end{table}
%=========================================

\section{Simplicial surfaces and mappings, and objective function} \label{sec:simplicial_objective}

To provide some basic background about the objects that we want to optimize, we briefly introduce the simplicial surfaces and mappings in Sect.~\ref{sec:simplicial_surfaces}, and then our objective function in Sect.~\ref{sec:objective_function}.

\subsection{Simplicial surfaces and mappings} \label{sec:simplicial_surfaces}

A simplicial surface parameterization is a bijective mapping between the simplicial surface and a domain with a simple canonical shape.
Formally, a simplicial surface $\cM$ is the underlying set of a simplicial $2$-complex $\mathcal{K}(\cM) = \cF(\cM)\cup\cE(\cM)\cup\mathcal{V}(\cM)$ composed of vertices
$$
\mathcal{V}(\cM) = \left\{\bv_\ell = \left( v_\ell^1, v_\ell^2, v_\ell^2 \right)\tr \in\R^3 \right\}_{\ell=1}^n,
$$
oriented triangular faces
$$
\cF(\cM) = \left\{ \tau_\ell = [\bv_{i_\ell}, \bv_{j_\ell}, \bv_{k_\ell}] \mid \bv_{i_\ell}, \bv_{j_\ell}, \bv_{k_\ell} \in\mathcal{V}(\cM) \right\}_{\ell=1}^m,
$$
and directed edges
$$
\cE(\cM) = \left\{ [\bv_i,\bv_j] \mid [\bv_i,\bv_j,\bv_k]\in\cF(\cM) \text{ for some $\bv_k\in\mathcal{V}(\cM)$} \right\}.
$$
A simplicial mapping $f\colon \cM\to\R^3$ is a particular type of piecewise affine mapping with the restriction mapping $f|_\tau$ being affine, for every $\tau\in\cF(\cM)$.
We denote 
$$
\f_\ell \coloneqq f(\bv_\ell) = \left( f_\ell^1, f_\ell^2, f_\ell^3 \right)\tr, ~ \textrm{ for every $\bv_\ell\in\mathcal{V}(\cM)$}.
$$
The mapping $f$ can be represented as a matrix
\begin{equation}\label{eq:representative_matrix}
    \f 
= \begin{bmatrix}
\f_1\tr \\
\vdots \\
\f_n\tr
\end{bmatrix}
= \begin{bmatrix}
f_1^1 & f_1^2 & f_1^3 \\
\vdots & \vdots & \vdots \\
f_n^1 & f_n^2 & f_n^3
\end{bmatrix}
\eqqcolon \begin{bmatrix}
\f^1 & \f^2 & \f^3
\end{bmatrix},
\end{equation}
or a vector
$$
\vecop(\f) = \begin{bmatrix}
\f^1 \\
\f^2 \\
\f^3
\end{bmatrix}.
$$

\subsection{The objective function}\label{sec:objective_function}

The authalic or equiareal energy for simplicial mappings $f\colon \cM\to\R^{3}$ is defined as
\[
   E_A(\f) = E_S(\f) - \cA(\f),
\]
where $E_{S}$ is the stretch energy defined as
$$
   E_{S}(\f) = \frac{1}{2} \, \vecop(\f)\tr (I_3\otimes L_S(\f)) \vecop(\f).
$$
Here, $ I_{3} $ is the identity matrix of size 3-by-3, $\otimes$ denotes the Kronecker product, and $ L_S(\f) $ is the weighted Laplacian matrix defined by
\begin{equation} \label{eq:L_S}
   [L_S(\f)]_{i,j} =
\begin{cases}
-\sum_{[\bv_i,\bv_j,\bv_k]\in\cF(\cM)} [w_S(\f)]_{i,j,k}  &\mbox{if $[{\bv}_i,{\bv}_j]\in\cE(\cM)$,}\\
-\sum_{\ell\neq i} [L_S(\f)]_{i,\ell} &\mbox{if $j = i$,}\\
0 &\mbox{otherwise.}
\end{cases}
\end{equation}
The modified cotangent weights $w_S(\f)$ are defined as
\begin{equation} \label{eq:omega}
   [w_S(\f)]_{i,j,k} = \frac{\cot(\theta_{i,j}(\f))  \, |[\f_i,\f_j,\f_k]|}{2|[\bv_i,\bv_j,\bv_k]|},
\end{equation}
with $\theta_{i,j}(\f)$ being the angle opposite to the edge $[\f_i,\f_j]$ at the point $\f_k$ on the image $f(\cM)$, as illustrated in Figure \ref{fig:cot}.

%=========================================
% FIGURE: COTANGENT WEIGHTS
%=========================================
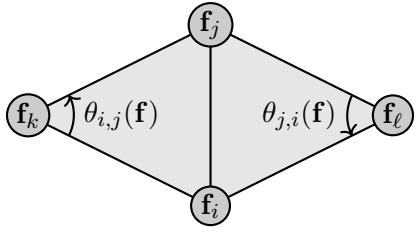
\begin{figure}[htbp]
\centering
\begin{tikzpicture}[thick,scale=1.2]
\coordinate (v_i) at (0,0);
\coordinate (v_j) at (0,2);
\coordinate (v_k) at (2,1);
\coordinate (v_l) at (-2,1);
\filldraw[black!10] (v_i) -- (v_j) -- (v_k);
\filldraw[black!10] (v_i) -- (v_j) -- (v_l);
\pic[draw, ->, "$\theta_{i,j}(\f)$", angle eccentricity=2.05, angle radius=0.6cm]{angle = v_i--v_l--v_j};
\pic[draw, ->, "$\theta_{j,i}(\f)$", angle eccentricity=2.05, angle radius=0.6cm]{angle = v_j--v_k--v_i};
\draw{
(v_i) -- (v_j) -- (v_k) -- (v_i) -- (v_l) -- (v_j)
};
\tikzstyle{every node}=[circle, draw, fill=black!20, inner sep=1pt, minimum width=2pt]
\draw{
(0,0) node{$\f_i$}
(0,2) node{$\f_j$}
(2,1) node{$\f_\ell$}
(-2,1) node{$\f_k$}
};
\end{tikzpicture}
\caption{An illustration of the cotangent weight defined on the image of $f$.}
\label{fig:cot}
\end{figure}
%=========================================

It is proved in~\protect{\cite[Corollary 3.4]{Yueh:2023}} that $E_A(\f)\geq 0$ and the equality holds if and only if $f$ preserves the area.

In this work, we consider as objective function the following formulation with a prefactor
\begin{equation}\label{eq:objective_function}
   E(\f) = \frac{|\cM|}{\cA(\f)} E_{S}(\f) - \cA(\f).
\end{equation}
The prefactor $|\cM|/\cA(\f)$ is because, due to the optimization process, the image area $\cA(\f)$ is not constant. This objective function has a property analogous to that of $E_A(\f)$ in~\protect{\cite[Corollary 3.4]{Yueh:2023}}, which is stated in the following theorem. 

\begin{theorem}[{\cite[Theorem 1]{LiuYueh:2024}}]
The objective function \eqref{eq:objective_function} satisfies $E(\f)\geq 0$, and the equality holds if and only if $f$ is area-preserving.
\end{theorem}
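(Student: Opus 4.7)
The plan is to reduce the inequality to a one-line application of the Cauchy--Schwarz inequality. The starting point is the per-triangle decomposition of the stretch energy that follows from the definition of $L_S(\f)$ through the modified cotangent weights \eqref{eq:omega}, namely
$$
E_S(\f) \;=\; \sum_{\tau \in \cF(\cM)} \frac{|f(\tau)|^{2}}{|\tau|},
$$
which is established in \cite{Yueh:2023}. Together with the obvious identities $\cA(\f) = \sum_{\tau} |f(\tau)|$ and $|\cM| = \sum_{\tau} |\tau|$, this rewrites the three quantities that appear in \eqref{eq:objective_function} as sums over the same index set $\cF(\cM)$, which is exactly the form needed to invoke a classical inequality.

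Next, I would apply the Cauchy--Schwarz inequality to the vectors $\bigl(|f(\tau)|/\sqrt{|\tau|}\bigr)_{\tau}$ and $\bigl(\sqrt{|\tau|}\bigr)_{\tau}$ indexed by $\tau \in \cF(\cM)$:
$$
\cA(\f)^{2} \;=\; \Bigl(\sum_{\tau} |f(\tau)|\Bigr)^{2} \;\le\; \Bigl(\sum_{\tau} \tfrac{|f(\tau)|^{2}}{|\tau|}\Bigr) \Bigl(\sum_{\tau} |\tau|\Bigr) \;=\; E_S(\f) \cdot |\cM|.
$$
Dividing both sides by $\cA(\f) > 0$ and rearranging yields $\frac{|\cM|}{\cA(\f)} E_S(\f) \geq \cA(\f)$, which is precisely the statement $E(\f) \geq 0$.

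For the equality case, Cauchy--Schwarz is tight iff there exists a constant $\lambda \geq 0$ such that $|f(\tau)| = \lambda |\tau|$ for every $\tau \in \cF(\cM)$; summing over $\tau$ then pins down $\lambda = \cA(\f)/|\cM|$. Equality therefore holds iff the local area ratio $|f(\tau)|/|\tau|$ is the same on every face, which is the appropriate notion of \emph{area-preserving} here: because of the prefactor $|\cM|/\cA(\f)$, the objective is invariant under a uniform rescaling of the image area, and the image is in any event constrained to the torus $\Torus$ of fixed total area. The converse implication is immediate, since constancy of $|f(\tau)|/|\tau|$ turns the Cauchy--Schwarz inequality into an equality.

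The only substantive ingredient in this argument is the per-triangle formula for $E_S(\f)$, and that identity is already available in \cite{Yueh:2023}; everything else is two lines of manipulation. Consequently I do not foresee any serious obstacle, which is consistent with the theorem being attributed to \cite{LiuYueh:2024} as a direct corollary of the analogous property of $E_A(\f)$ rather than being reproved from scratch.
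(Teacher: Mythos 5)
Your proposal is correct and follows essentially the same route as the paper: both apply the Cauchy--Schwarz inequality to the vectors $\bigl(\sqrt{|\tau|}\bigr)_{\tau}$ and $\bigl(|f(\tau)|/\sqrt{|\tau|}\bigr)_{\tau}$ to obtain $|\cM|\,E_S(\f)\geq\cA(\f)^2$, divide by $\cA(\f)>0$, and characterize equality by constancy of the ratio $|f(\tau)|/|\tau|$. The only (harmless) difference is that you state the per-triangle decomposition $E_S(\f)=\sum_{\tau}|f(\tau)|^2/|\tau|$ explicitly, whereas the paper uses it implicitly inside the Cauchy--Schwarz step.
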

\begin{proof}
By applying the Cauchy--Schwarz inequality on the sequences 
$\left\{\sqrt{|\tau|}\right\}_{\tau\in\cF(\cM)}$ and $\left\{|f(\tau)|/\sqrt{|\tau|}\right\}_{\tau\in\cF(\cM)}$ implies 
$$
\left(\sum_{\tau\in\cF(\cM)} \left( \sqrt{|\tau|} \right)^2 \right)
\left(\sum_{\tau\in\cF(\cM)} \left(\frac{|f(\tau)|}{\sqrt{|\tau|}} \right)^2 \right)
\geq 
\left(\sum_{\tau\in\cF(\cM)}|f(\tau)|\right)^2.
$$
In other words,
$$
|\cM|\,E_S(\f) \geq \cA(\f)^2.
$$
Noting that $\cA(\f)>0$, dividing by $\cA(\f)$ gives
\[
E(\f) = \frac{|\cM|}{\cA(\f)} E_S(\f) - \cA(\f)\geq 0.
\]
Moreover, the equality holds precisely when $\frac{|f(\tau)|}{\sqrt{|\tau|}}$ is proportional to $\sqrt{|\tau|}$, i.e., $\frac{|f(\tau)|}{|\tau|}$ is constant. Hence, $E(\f)=0$ if and only if $f$ scales each face by the same factor, i.e., $f$ is area-preserving.
\end{proof}

To perform numerical optimization via the proposed methods, we need to compute the (Euclidean) gradient, which is given by the following proposition.

\begin{proposition}[Formula for $\nabla E$]
The gradient of $E(\f)$ can be explicitly formulated as
\begin{equation} \label{eq:grad_E}
   \nabla E(\f) = \frac{2|\cM|}{\cA(\f)} \, L_S(\f) \, \f - \left( 1 + \frac{|\cM| E_S(\f)}{\cA(\f)^2} \right) \nabla\cA(\f).
\end{equation}
\end{proposition}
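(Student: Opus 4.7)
The plan is to treat $E(\mathbf{f})$ as a combination of three building blocks, differentiate each with respect to $\mathbf{f}$ using standard calculus rules, and then invoke a known identity for $\nabla E_S$ to absorb the apparently nonlinear dependence of $L_S$ on $\mathbf{f}$.

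First, I would rewrite the objective as
\[
E(\mathbf{f}) \;=\; |\mathcal{M}|\,\frac{E_S(\mathbf{f})}{\mathcal{A}(\mathbf{f})} \;-\; \mathcal{A}(\mathbf{f}),
\]
so that only three Euclidean gradients are needed: $\nabla E_S(\mathbf{f})$, $\nabla \mathcal{A}(\mathbf{f})$, and (via the quotient rule) their combination. Applying the quotient rule to the first term and linearity to the second gives, directly,
\[
\nabla E(\mathbf{f}) \;=\; \frac{|\mathcal{M}|}{\mathcal{A}(\mathbf{f})}\,\nabla E_S(\mathbf{f})\;-\;\frac{|\mathcal{M}|\,E_S(\mathbf{f})}{\mathcal{A}(\mathbf{f})^2}\,\nabla \mathcal{A}(\mathbf{f})\;-\;\nabla \mathcal{A}(\mathbf{f}).
\]
Grouping the two $\nabla \mathcal{A}(\mathbf{f})$ contributions yields precisely the prefactor $1 + |\mathcal{M}| E_S(\mathbf{f})/\mathcal{A}(\mathbf{f})^2$ appearing in \eqref{eq:grad_E}, so the identity will follow once we substitute the correct expression for $\nabla E_S(\mathbf{f})$.

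The crux of the argument is therefore to establish
\[
\nabla E_S(\mathbf{f}) \;=\; 2\, L_S(\mathbf{f})\,\mathbf{f}.
\]
Although $E_S$ is written as a quadratic form $\tfrac{1}{2}\,\mathrm{vec}(\mathbf{f})^\top (I_3 \otimes L_S(\mathbf{f}))\,\mathrm{vec}(\mathbf{f})$, one cannot just read off the gradient as $L_S(\mathbf{f})\mathbf{f}$ because the matrix $L_S(\mathbf{f})$ itself depends on $\mathbf{f}$ through the modified cotangent weights \eqref{eq:omega}. The nontrivial point is that, when one differentiates carefully, the contribution coming from $\partial L_S / \partial \mathbf{f}$ vanishes, so the formula reduces to the one it would have in the constant-matrix case. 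This identity is exactly the content of the stretch-energy gradient formula proved in \cite{Yueh:2023}, which I would cite rather than re-derive; geometrically, it reflects the fact that $E_S(\mathbf{f})$ equals the sum of image triangle areas $|f(\tau)|$ weighted in a way that makes $L_S(\mathbf{f})\mathbf{f}$ the correct differential.

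Once this identity is invoked, substitution into the expression above immediately gives \eqref{eq:grad_E}. The only real obstacle is the stretch-energy gradient identity itself; the rest is a one-line application of the quotient rule. I would therefore present the proof as a short computation that (i) applies the quotient and sum rules, (ii) cites \cite{Yueh:2023} for $\nabla E_S(\mathbf{f}) = 2 L_S(\mathbf{f})\mathbf{f}$, and (iii) collects the two $\nabla \mathcal{A}(\mathbf{f})$ terms into the stated prefactor.
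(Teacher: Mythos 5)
Your proposal is correct and follows essentially the same route as the paper: both apply the product/quotient rule to the term $|\cM|\,E_S(\f)/\cA(\f)$, collect the two $\nabla\cA(\f)$ contributions into the prefactor $1 + |\cM|E_S(\f)/\cA(\f)^2$, and cite the identity $\nabla E_S(\f) = 2\,L_S(\f)\,\f$ from \cite{Yueh:2023} rather than re-deriving it. Your added remark that the $\f$-dependence of $L_S(\f)$ makes this identity nontrivial is a useful clarification, but does not change the argument.
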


\begin{proof}
The Leibniz rules indicate 
$$
\nabla E(\f) = \frac{|\cM|}{\cA(\f)} \nabla E_S(\f) + E_S(\f) \nabla \frac{|\cM|}{\cA(\f)} - \nabla\cA(\f).
$$
The desired \eqref{eq:grad_E} is obtained by applying the formula $\nabla E_S(\f) = 2\, L_S(\f) \, \f$ from \cite[(3.6)]{Yueh:2023}.
\end{proof}

Here, $\nabla E(\f)$ is an $n$-by-3 matrix obtained by reshaping the gradient vector of length $3n$. The term $\nabla\cA(\f)$ in \eqref{eq:grad_E} is explicitly formulated in the following proposition.

\begin{proposition}[Formula for $\nabla\cA$]
On a triangle $\tau=[\bv_i, \bv_j, \bv_k]\in\cF(\cM)$, the gradient of $\cA$ can be explicitly formulated as
\begin{equation} \label{eq:GradA}
\nabla\cA(\f_\tau) = \frac{|\tau|}{\cA(\f_\tau)} \, L_S(\f_\tau) \,\f_\tau,
\end{equation}
where $\f_\tau=[\f_i, \f_j, \f_k]^\top$.
\end{proposition}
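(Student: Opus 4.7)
The plan is to reduce the proposition to the identity $\nabla E_S(\f) = 2L_S(\f)\,\f$ from \cite[(3.6)]{Yueh:2023} (already invoked in the proof of \eqref{eq:grad_E}), together with the observation that, when restricted to a single face, the stretch energy admits the clean closed form
\[
E_S(\f_\tau) \;=\; \frac{\cA(\f_\tau)^{2}}{|\tau|}.
\]
This is precisely the face-wise statement underlying the Cauchy--Schwarz step in the proof of the preceding theorem, where the full stretch energy was implicitly decomposed as $E_S(\f) = \sum_{\tau}|f(\tau)|^{2}/|\tau|$. I would verify it directly by expanding the quadratic form $\tfrac12\vecop(\f_\tau)\tr\bigl(I_3\otimes L_S(\f_\tau)\bigr)\vecop(\f_\tau)$, substituting the definition \eqref{eq:omega} of the modified cotangent weights, and invoking the classical cotangent--area identity for a triangle, namely that the sum of $\cot(\text{angle})\cdot(\text{opposite side})^{2}$ over its three angles equals four times its area (a short consequence of the law of cosines combined with Heron's formula). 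Applied to the image triangle $[\f_i,\f_j,\f_k]$, this collapses the inner sum to $4\,|f(\tau)|$ and yields the announced relation.

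Once the single-triangle identity is in hand, I would apply $\nabla$ to both sides. On the left, \cite[(3.6)]{Yueh:2023} gives $\nabla E_S(\f_\tau) = 2\,L_S(\f_\tau)\,\f_\tau$. On the right, the chain rule, together with the fact that $|\tau|$ depends only on the source mesh and that $\cA(\f_\tau) > 0$ for a non-degenerate image triangle, gives $\frac{2\cA(\f_\tau)}{|\tau|}\nabla\cA(\f_\tau)$. Equating the two expressions and dividing by the positive factor $2\cA(\f_\tau)/|\tau|$ produces the claimed formula \eqref{eq:GradA}.

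The main --- though ultimately mild --- obstacle is the bookkeeping in the cotangent--area identity: one must carefully distinguish angles and side lengths of the source triangle $\tau$ from those of the image triangle $f(\tau)$, since \eqref{eq:omega} weights the \emph{image-side} cotangents by the ratio $|f(\tau)|/|\tau|$. Confusing these roles would spoil the simplification that yields $E_S(\f_\tau) = \cA(\f_\tau)^{2}/|\tau|$. An alternative, more elementary route would be to differentiate $\cA(\f_\tau) = \tfrac12\|(\f_j-\f_i)\times(\f_k-\f_i)\|$ directly using vector calculus, but this is mechanically heavier and does not reveal the link with $L_S\,\f_\tau$ that will be crucial for the optimization algorithms developed later in the paper.
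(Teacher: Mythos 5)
Your proposal is correct and follows essentially the same route as the paper: both rest on the face-wise identity $E_S(\f_\tau)=\cA(\f_\tau)^2/|\tau|$ together with $\nabla E_S(\f_\tau)=2L_S(\f_\tau)\f_\tau$, then apply the chain rule and equate. The only difference is that the paper simply cites these two facts from \cite[Lemma 3.1 and Theorem 3.5]{Yueh:2023}, whereas you sketch an independent verification of the first via the cotangent--area identity; that verification is sound but not needed.
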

\begin{proof}
Using the explicit formulas
$$
E_S(\f_\tau) = \frac{\cA(\f_\tau)^2}{|\tau|}, 
\quad
\nabla E_S(\f_\tau) = 2\,L_S(\f_\tau)\,\f_\tau
\quad\text{(see~\cite[Lemma 3.1 and Theorem 3.5]{Yueh:2023})},
$$
the chain rule gives
$$
\nabla E_S(\f_\tau) 
= \nabla \left(\frac{\cA(\f_\tau)^2}{|\tau|}\right)
= \frac{2\,\cA(\f_\tau)}{|\tau|} \, \nabla\cA(\f_\tau).
$$
Equating this with $2\,L_S(\f_\tau)\,\f_\tau$ and dividing by $2$, we obtain
$$
L_S(\f_\tau)\,\f_\tau 
= \frac{\cA(\f_\tau)}{|\tau|}\,\nabla\cA(\f_\tau),
$$
which is exactly \eqref{eq:GradA}.
\end{proof}
More details about the calculation of $\nabla\cA$ and its derivative are reported in \protect{\cite[Appendix~A]{Sutti_Yueh:2024}}.

\section{Fundamental domain and cohomology form} \label{sec:fundamental_domain}

A fundamental domain of a genus-one closed surface $\cM$ is a bounded, simply connected planar region $\cD\subset\R^2$ whose two pairs of opposite boundary curves are identified by linearly independent translation vectors $\mathbf{w}_1,\mathbf{w}_2\in\R^2$. The translations generate the lattice
$$
\Lambda = \{ k_{1}\mathbf{w}_1+k_2\mathbf{w}_2 \mid k_1, k_2\in\mathbb{Z}\}
$$
which tiles the plane, and the resulting quotient surface $\cD/\Lambda$ is compact and homeomorphic to $\cM$. This fundamental domain thus supplies the global coordinate chart on which we construct the initial area-preserving torus map. Later, in Figure~\ref{fig:vertebra_and_torus}(c), we show the fundamental domain for the simplicial surface named Vertebrae, which is one of the benchmark mesh models considered in this paper.

To compute the fundamental domain $\cD$ for the genus-one surface $\cM$, we first apply the \texttt{ReebHanTun} algorithm~\cite{Dey:2008} to extract two simple, independent, non-contractible, directed loops $\gamma_1$, $\gamma_2$ that intersect at one vertex. 
For each loop $\gamma_\ell$, we build an integer-valued closed 1-form 
$$
\eta_\ell([\bv_i,\bv_j]) = \begin{cases}
1 & \text{ if $\bv_i\in \gamma_\ell$ and $\bv_j$ on the left-hand side of $\gamma_\ell$,} \\
-1 & \text{ if $\bv_j\in \gamma_\ell$ and $\bv_i$ on the left-hand side of $\gamma_\ell$,} \\
0 & \text{ otherwise.}
\end{cases}
$$
Solving a cotangent-weighted Poisson equation, i.e.,
$$
\sum_{\bv_j\in N(\bv_i)} \frac{\cot\theta_{i,j} + \cot\theta_{j,i}}{2} \left( \eta_\ell([\bv_i,\bv_j]) + h_\ell(\bv_j) - h_\ell(\bv_i) \right) = 0,
$$
with $\theta_{i,j}$ and $\theta_{j,i}$ being the angles opposite edge $[\bv_i,\bv_j]$, produces a harmonic 1-form 
$$
\omega_\ell([\bv_i,\bv_j]) = \eta_\ell([\bv_i,\bv_j]) + h_\ell(\bv_j) - h_\ell(\bv_i), \quad \ell=1,2.
$$
Each harmonic 1-form $\omega_{\ell}$ defines a holomorphic 1-form $\zeta_\ell=\omega_\ell+i\star\omega_\ell$, where $\star$ denotes the Hodge operator.  
After cutting the mesh along $\gamma_{1}\cup\gamma_{2}$, we integrate an appropriate linear combination $\zeta = c_1 \zeta_1 + c_2 \zeta_2$ from a root vertex $\bv_1$ to every other vertex $\bv_k$ as $g(\bv_k) = \int_{\bv_1}^{\bv_k} \zeta$. The resulting image $g(\cM)$ of the complex-valued mapping is the desirable fundamental domain $\cD$. Algorithm~\ref{algo:fundamental_domain} gives a pseudocode for computing the fundamental domain; more computational details can be found in~\protect{\cite{Yueh:2020}}.

%===========================================================================
% ALGORITHM 1: CALCULATION OF THE FUNDAMENTAL DOMAIN
%===========================================================================
\begin{algorithm}
\SetAlgoLined
 Given a genus-one closed surface $\cM$\;
 \KwResult{Fundamental domain $\cD$.}
  Apply the \texttt{ReebHanTun} algorithm to extract $\gamma_1$, $\gamma_2$\;
  For each $ \gamma_{\ell} $, $ \ell = 1,2 $, build an integer-valued closed 1-form $ \eta_{\ell} $\;
  Compute the harmonic 1-form $\omega_{\ell}$ by solving a cotangent-weighted Poisson equation\;
  Compute the holomorphic 1-forms $\zeta_\ell=\omega_\ell+i\star\omega_\ell$\;
  Slice the mesh along $\gamma_{1}\cup\gamma_{2}$\;
  Integrate an appropriate linear combination $\zeta = c_1 \zeta_1 + c_2 \zeta_2$ from a root vertex $\bv_1$ to every other vertex $\bv_k$ as $g(\bv_k) = \int_{\bv_1}^{\bv_k} \zeta$\; 
 \textbf{return} $\cD = g(\cM) $.
 \caption{Calculation of the fundamental domain.}\label{algo:fundamental_domain}
\end{algorithm}
%===========================================================================

\section{Riemannian optimization framework and geometry} \label{sec:riem_optim_and_geometry}

The \textit{Riemannian optimization framework} \protect{\cite{EAS:1998,AMS:2008,Boumal:2023}} solves constrained optimization problems where the constraints have a geometric structure, allowing the constraints to be considered explicitly. More precisely, the optimization variables are constrained to a smooth manifold, and the optimization is performed on that manifold. Typically, the manifolds considered are matrix manifolds, meaning there is a natural representation of their elements in matrix form. In particular, in this paper, the optimization variable is constrained to a power manifold of $n$ ring tori embedded in $\R^{3}$.

Generally speaking, a line-search method in the Riemannian framework determines at a current iterate $\x_{k} $ on a manifold $M$ a search direction $\bd_{k}$ on the tangent space $\mathrm{T}_{\x_{k}} M$. The next iterate $\x_{k+1}$ is then determined by a line search along a curve $\alpha \mapsto \Retraction_{\x_{k}}(\alpha \bd_{k})$ where $\Retraction_{\x_{k}} \colon \mathrm{T}_{\x_{k}} M \to M$ is the retraction mapping.
The procedure is then repeated for $\x_{k+1}$ taking the role of $\x_{k}$. Similarly to optimization methods in Euclidean space, search directions can be the negative of the Riemannian gradient, leading to the Riemannian steepest descent method. Other choices of search directions lead to different methods, e.g., Riemannian versions of the trust-region method~\protect{\cite{ABG:2007}} or the (limited-memory) BFGS method~\protect{\cite{RingWirth:2012}}.

In what follows, we introduce some fundamental geometry concepts necessary to formulate the algorithms. We first describe the geometry of the ring torus $\Torus$ embedded in $\R^{3}$, and then we switch to the power manifold of $n$ ring tori, denoted by $\powerTorus$.

\subsection{Geometry of the ring torus \texorpdfstring{$\Torus$}{TEXT}}

This section describes the geometry of the ring torus, including tools such as the projection of a point onto the torus, the projection onto the tangent space, the retraction and the parallel transport of tangent vectors.

Let $ \Sone $, $ \mathbb{S}^{1}_{R} $ be two circles of minor radius $ r $ and major radius $R>r$, respectively.
The (ring) torus can be regarded as a Cartesian product of the two circles: $ \Torus = \Sone \times \mathbb{S}^{1}_{R} $, i.e., it is a surface of revolution generated by rotating the circle of minor radius $\Sone$ around the circle of major radius $\mathbb{S}^{1}_{R}$.

A generic point $\mathbf{p}$ of a torus $ \Torus $ has coordinates
\[
   \begin{cases}
      p_{1} = ( R + r\cos\phi ) \cos\theta, \\
      p_{2} = ( R + r\cos\phi ) \sin\theta, \\
      p_{3} = r\sin\phi,
   \end{cases}
\]
where the azimuthal angle is $\theta \in [0,2\pi)$ and the altitude (or elevation) angle is $\phi \in [0,2\pi)$.

Since we wish every vertex of the mesh to be constrained to a torus, our optimization problem will be formulated on a Cartesian product of $n$ ring tori $\Torus$, i.e.,
\[
   \powerTorus = \underbrace{\Torus \times \dots \times \Torus}_{n~\text{times}},
\]
which we also call power manifold of $n$ ring tori.
This is in analogy to what we did in our previous work~\protect{\cite{Sutti_Yueh:2024}}, where we considered optimization on a power manifold of unit spheres. Before discussing the power manifold $ \powerTorus $, we dive deeper into the geometric tools of $ \Torus $.

\subsubsection{Projection of a point onto the torus \texorpdfstring{$\Torus$}{TEXT}} \label{sec:proj_point_onto_torus}

Let $ \q = ( q_{1}, q_{2}, q_{3} ) $ be a generic point of $\R^{3}$. Let $\q'$ denote the point at the intersection between $ \mathbb{S}^{1}_{R} $ and the vertical plane passing through $\q$ and the origin; see Figure~\ref{fig:torus_cross_section}. Then the coordinates of $\q'$ are  $ ( R\cos\theta_{\q}, \ R\sin\theta_{\q}, \ 0 ) $, where $\theta_{\q}$ is the angle between $\q$ and the $xy$-plane. 

The projection of a generic point $\q \in \R^{3}$ onto $ \Torus $ is
\begin{equation}\label{eq:proj_onto_torus}
   \widetilde{\q} = \Pi_{\Torus}(\q) =
   \begin{pmatrix}
      ( R + r\cos\phi_{\q} ) \cos\theta_{\q} \\
      ( R + r\cos\phi_{\q} ) \sin\theta_{\q} \\
      r\sin\phi_{\q}
   \end{pmatrix}.
\end{equation}
We calculate the values of $\cos\theta_{\q}$ and $\sin\theta_{\q}$ directly without passing from the angle $\theta_{\q}$, i.e.,
\begin{equation}\label{eq:cos_sin_theta}
   \cos\theta_{\q} = \frac{q_{1}}{\sqrt{q_{1}^{2}+q_{2}^{2}}}, \qquad \sin\theta_{\q} = \frac{q_{2}}{\sqrt{q_{1}^{2}+q_{2}^{2}}}.
\end{equation}
Similarly, we write $\cos\phi_{\q}$ and $\sin\phi_{\q}$ directly without passing from $\phi_{\q}$, namely, 
\begin{equation}\label{eq:cos_sin_phi}
   \cos\phi_{\q} = \frac{c}{\sqrt{c^{2}+q_{3}^{2}}}, \qquad \sin\phi_{\q} = \frac{q_{3}}{\sqrt{c^{2}+q_{3}^{2}}},
\end{equation}
where $ c \coloneqq \sqrt{q_{1}^{2}+q_{2}^{2}}-R $. These calculations are formalized by Algorithm~\ref{algo:projection_on_torus}, and the auxiliary Algorithms~\ref{algo:cos_sin_theta} and \ref{algo:cos_sin_phi}.

%===========================================================================
% ALGORITHM 2: PROJECTION OF A POINT ONTO THE RING TORUS
%===========================================================================
\begin{algorithm}
\SetAlgoLined
 Given point $ \q \in \R^{3} $, torus $ \Torus $\;
 \KwResult{Projection $ \widetilde{\q} \equiv (\widetilde{q}_{1},\widetilde{q}_{2},\widetilde{q}_{3}) $ of $ \q $ onto $ \Torus $.}
 Call Algorithm~\ref{algo:cos_sin_theta} to compute $ \cos \theta_{\q} $ and $ \sin \theta_{\q} $\;
 Call Algorithm~\ref{algo:cos_sin_phi} to compute $ \cos \phi_{\q} $ and $ \sin \phi_{\q} $\;
 $ \widetilde{R} \leftarrow R + r \cos\phi_{\q} $\;
 $ \widetilde{q}_{1} \leftarrow \widetilde{R} \cos\theta_{\q} $\;
 $ \widetilde{q}_{2} \leftarrow \widetilde{R} \sin\theta_{\q} $\;
 $ \widetilde{q}_{3} \leftarrow r \sin\phi_{\q} $\;
 \textbf{return} $ \widetilde{\q} \equiv (\widetilde{q}_{1},\widetilde{q}_{2},\widetilde{q}_{3}) $.
 \caption{Projection of a point $ \q $ onto the torus $ \Torus $.}\label{algo:projection_on_torus}
\end{algorithm}
%===========================================================================

%===========================================================================
% ALGORITHM 3: CALCULATION OF THE AZIMUTHAL ANGLES 
%===========================================================================
\begin{algorithm}
\SetAlgoLined
 Given point $ \q \in \R^{3} $, torus $ \Torus $\;
 \KwResult{$ \cos \theta_{\q} $ and $ \sin \theta_{\q} $.}
 $ \mathrm{den}_{\theta} \leftarrow \sqrt{q_{1}^{2}+q_{2}^{2}} $\;
 $ \cos \theta_{\q} \leftarrow q_{1}/\mathrm{den}_{\theta} $\;
 $ \sin \theta_{\q} \leftarrow q_{2}/\mathrm{den}_{\theta} $\;
 \textbf{return} $ \cos \theta_{\q} $ and $ \sin \theta_{\q} $.
 \caption{Compute $ \cos \theta_{\q} $ and $ \sin \theta_{\q} $ of a point $ \q \in \Torus $.}\label{algo:cos_sin_theta}
\end{algorithm}
%===========================================================================

%===========================================================================
% ALGORITHM 4: CALCULATION OF THE ELEVATION ANGLES
%===========================================================================
\begin{algorithm}
\SetAlgoLined
 Given point $ \q \in \R^{3} $, torus $ \Torus $\;
 \KwResult{$ \cos \phi_{\q} $ and $ \sin \phi_{\q} $.}
 $ \mathrm{den}_{\theta} \leftarrow \sqrt{q_{1}^{2}+q_{2}^{2}} $\;
 $ c \leftarrow \mathrm{den}_{\theta} - R $\;
 $ \mathrm{den}_{\phi} \leftarrow \sqrt{c^{2}+q_{3}^{2}} $\;
 $ \cos \phi_{\q} \leftarrow c/\mathrm{den}_{\phi} $\;
 $ \sin \phi_{\q} \leftarrow q_{3}/\mathrm{den}_{\phi} $\;
 \textbf{return} $ \cos \phi_{\q} $ and $ \sin \phi_{\q} $.
 \caption{Compute $ \cos \phi_{\q} $ and $ \sin \phi_{\q} $ of a point $ \q \in \Torus
 $.}\label{algo:cos_sin_phi}
\end{algorithm}
%===========================================================================

%=========================================
% TIKZ DRAWING: TORUS CROSS SECTION
%=========================================
\begin{figure}[htbp]
  \centering
  \includegraphics[width=0.8\textwidth]{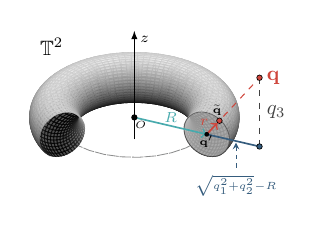}
  \caption{A torus cross-section illustrating the projection of a point $ \q $ onto the torus $\Torus$. This image has been adapted from \url{https://tikz.net/torus/}.}\label{fig:torus_cross_section}
\end{figure}
%=========================================

\subsubsection{Projection of a point onto the tangent space to \texorpdfstring{$\Torus$}{TEXT} at \texorpdfstring{$\f_{\ell}$}{TEXT}}

Let $ \q = ( q_{1}, q_{2}, q_{3} ) $ be a point of $\R^{3}$, and let $ \f_{\ell} = \left( f_\ell^1, f_\ell^2, f_\ell^3 \right)\tr $ be a point of $\Torus$, consistently with the notation introduced in Sect.~\ref{sec:simplicial_surfaces}.
The projection of $ \q $ onto the tangent space at $\f_{\ell}$ to $\Torus$ is computed as follows.

\begin{enumerate}
    \item Call Algorithm~\ref{algo:cos_sin_theta} to compute cosine and sine of the azimuthal angle $ \theta_{\f_{\ell}} $, i.e.,
    \[
       \cos\theta_{\f_{\ell}} = \frac{f_{\ell}^{1}}{\sqrt{(f_{\ell}^{1})^{2}+(f_{\ell}^{2})^{2}}}, \qquad \sin\theta_{\f_{\ell}} = \frac{f_{\ell}^{2}}{\sqrt{(f_{\ell}^{1})^{2}+(f_{\ell}^{2})^{2}}}.
    \]
    \item Compute the coordinates of the center of the circle $ \Sone $ given by the intersection between the torus and the vertical plane that passes through the points $ \q $ and the origin, i.e.,
    \[
       \bc = \left( R\cos\theta_{\f_{\ell}}, \, R\sin\theta_{\f_{\ell}}, \, 0 \right).
    \]
    \item Translate the points $\f_{\ell}$ and $\q$ to the circle $ \Sone $ centered at the origin, i.e.,
    \[
       \widehat{\f_{\ell}} = \f_{\ell} - \bc, \qquad \widehat{\q} = \q - \bc.
    \]
    \item Project $\widehat{\q}$ onto the tangent space to $\Sone$ at $\f_{\ell}$,
    \[
       \P_{\mathrm{T}_{\widehat{\f_{\ell}}}\Sone} \! \left( \widehat{\q} \right) = \left( I - \frac{\widehat{\f_{\ell}}\widehat{\f_{\ell}}\tr}{\widehat{\f_{\ell}}\tr\widehat{\f_{\ell}}} \right) \widehat{\q} = \widehat{\q} - \frac{\widehat{\f_{\ell}}\tr\widehat{\q}}{\widehat{\f_{\ell}}\tr\widehat{\f_{\ell}}} \, \widehat{\f_{\ell}} + \widehat{\f_{\ell}}.
    \]
    \item Translate the result back to the ``original position'' on the torus, obtaining the sought projection:
    \[
       \P_{\mathrm{T}_{\f_{\ell}}\Torus} \! \left( \q \right) \coloneqq \P_{\mathrm{T}_{\widehat{\f_{\ell}}}\Sone} \! \left( \widehat{\q} \right) + \bc.
    \]
\end{enumerate}
Algorithm~\ref{algo:proj_onto_tg_space} gives a pseudocode for the projection of a point $\q$ onto the tangent space $ \mathrm{T}_{\f_{\ell}}\Torus $.

%===========================================================================
% ALGORITHM 5: PROJECTION OF A POINT ONTO THE TANGENT SPACE TO THE TORUS
%===========================================================================
\begin{algorithm}
\SetAlgoLined
 Given point $ \q \in \R^{3} $, torus $ \Torus $, point $ \f_{\ell} \in \Torus $, and tangent space $ \mathrm{T}_{\f_{\ell}}\Torus $\;
 \KwResult{Projection $ \P_{\mathrm{T}_{\f_{\ell}}\Torus} \! \left( \q \right) $ of $ \q $ onto $ \mathrm{T}_{\f_{\ell}}\Torus $.}
 Call Algorithm~\ref{algo:cos_sin_theta} to compute $ \cos \theta_{\f_{\ell}} $ and $ \sin \theta_{\f_{\ell}} $\;
 $ c_{1} \leftarrow R \cos\theta_{\f_{\ell}} $\;
 $ c_{2} \leftarrow R \sin\theta_{\f_{\ell}} $\;
 $ c_{3} \leftarrow 0 $\;
 $ \widehat{\f_{\ell}} \leftarrow \f_{\ell} - \bc $\;
 $ \widehat{\q} \leftarrow \q - \bc $\;
 $ \P_{\mathrm{T}_{\widehat{\f_{\ell}}}\Sone} \! \left( \widehat{\q} \right) \leftarrow \widehat{\q} - \frac{\widehat{\f_{\ell}}\tr\widehat{\q}}{\widehat{\f_{\ell}}\tr\widehat{\f_{\ell}}} \, \widehat{\f_{\ell}} + \widehat{\f_{\ell}} $\;
 \textbf{return} $ \P_{\mathrm{T}_{\f_{\ell}}\Torus} \! \left( \q \right) \leftarrow \P_{\mathrm{T}_{\widehat{\f_{\ell}}}\Sone} \! \left( \widehat{\q} \right) + \bc $.
 \caption{Projection of a point $\q$ onto the tangent space $ \mathrm{T}_{\f_{\ell}}\Torus $.}\label{algo:proj_onto_tg_space}
\end{algorithm}
%===========================================================================

\subsubsection{Retraction onto \texorpdfstring{$\Torus$}{TEXT}}

A retraction is a mapping from the tangent space to the manifold used to turn tangent vectors into points of the manifold, and functions defined on the manifold into functions defined on the tangent space; see \protect{\cite{AMS:2008,Absil:2012}} for more details. The key idea relevant to our work is that, for any embedded submanifold, a simple retraction is given by taking a tangent vector step from a given point of the manifold into the embedding space, followed by a projection onto the manifold; see, e.g., \protect{\cite[Prop.~3.6.1]{AMS:2008}}.

The retraction of a vector $ \boldxi \in \mathrm{T}_{\x}\Torus $ from the tangent space $ \mathrm{T}_{\x}\Torus $ to the torus $ \Torus $ is calculated by moving from $ \x $ in the direction of $ \boldxi $ in the embedding space $\R^{3}$, and then projecting $\x + \boldxi$ onto the torus $\Torus$ using \eqref{eq:proj_onto_torus}, i.e.,
\begin{equation}\label{eq:retraction_on_torus}
   \Retraction_{\x}(\boldxi) \coloneqq \Pi_{\Torus}(\x + \boldxi) =
   \begin{pmatrix}
      ( R + r\cos\phi_{\x + \boldxi} ) \cos\theta_{\x + \boldxi} \\
      ( R + r\cos\phi_{\x + \boldxi} ) \sin\theta_{\x + \boldxi} \\
      r\sin\phi_{\x + \boldxi}
   \end{pmatrix},
\end{equation}
where the angles are computed using the formulas \eqref{eq:cos_sin_theta} and \eqref{eq:cos_sin_phi}, with $\x + \boldxi$ taking the role of $\q$. In other words, the algorithm for the retraction at $ \x $ of $ \boldxi $ onto $\Torus$ is given by Algorithm~\ref{algo:projection_on_torus} applied to $\x + \boldxi$.

\subsubsection{Parallel transport}\label{sec:parallel_transp}
Parallel transport enables the consistent movement of vectors between tangent spaces. In this paper, parallel transport on the torus is employed within the Riemannian conjugate gradient method discussed in Sect.~\ref{sec:rcg_method}. From a theoretical perspective, parallel transport is crucial for formulating the Lipschitz condition on Riemannian gradients and establishing convergence guarantees.

Formally, parallel transport is defined as follows. Given a Riemannian manifold $(M,g)$ and two points $\x,\y\in M$, the parallel transport $\mathcal{T}_{\x\to\y}\colon \mathrm{T}_{\x}M \to \mathrm{T}_{\y}M$ is a linear operator that preserves the inner product between two tangent vectors, namely
\[
   \forall\boldxi, \boldsymbol{\zeta} \in \mathrm{T}_{\x}M, \qquad \langle \mathcal{T}_{\x\to \y}\xi, \mathcal{T}_{\x\to \y}\boldsymbol{\zeta} \rangle_{\y}=\langle \boldxi, \boldsymbol{\zeta} \rangle_{\x}.
\]
In general, computing parallel transports involves numerically solving ordinary differential equations (ODEs). This process requires explicitly selecting a (possibly geodesic) curve that connects points $\x$ and $\y$. To determine a minimizing geodesic, one must compute the Riemannian logarithm. As a result, the computation of parallel transports can be quite costly in practice.

To address these computational challenges for our specific case, we explicitly construct the parallel transport on the torus, exploiting the fact that the torus $ \Torus $ is given by the Cartesian product of two circles, i.e., $ \Torus = \Sone \times \mathbb{S}^{1}_{R} $.

\subsubsection{Parallel transport on \texorpdfstring{$\Torus$}{TEXT}}

Given a torus $ \Torus $, two points $ \x, \, \y \in \Torus $, and a tangent vector $ \boldxi_{\x} \in \mathrm{T}_{\x}\Torus $, our aim is to transport $\boldxi_{\x}$ to the tangent space $\mathrm{T}_{\y}\Torus$. Exploiting the Cartesian product structure of $\Torus$, this can be achieved via two rotations and two translations, as follows.

\begin{enumerate}
    \item Rotate $\boldxi_{\x}$ by an angle $\theta_{\y} - \theta_{\x}$ with respect to the $z$ axis, obtaining $\boldxi'$:
    \[
       \boldxi' = \mathbf{R}_{\theta_{\y} - \theta_{\x}, z} \, \boldxi_{\x} =  \begin{pmatrix}
       \cos\theta_{\mathrm{diff}} \, \xi_{1} - \sin\theta_{\mathrm{diff}} \, \xi_{2} \\
       \sin\theta_{\mathrm{diff}} \, \xi_{1} + \cos\theta_{\mathrm{diff}} \, \xi_{2} \\
       \xi_{3}
       \end{pmatrix},
    \]
    where
    \[
        \cos\theta_{\mathrm{diff}} \coloneqq \cos( \theta_{\y} - \theta_{\x} ) = \cos\theta_{\y}\cos\theta_{\x} + \sin\theta_{\y}\sin\theta_{\x},
    \]
    \[
       \sin\theta_{\mathrm{diff}} \coloneqq \sin( \theta_{\y} - \theta_{\x} ) = \sin\theta_{\y}\cos\theta_{\x} - \cos\theta_{\y}\sin\theta_{\x},
    \]
    with
    \[
       \cos\theta_{\x} = \frac{x_{1}}{\sqrt{x_{1}^{2}+x_{2}^{2}}}, \qquad \sin\theta_{\x} = \frac{x_{2}}{\sqrt{x_{1}^{2}+x_{2}^{2}}}.
    \]
    \item Translate $\boldxi'$ to the origin: $\boldxi''=\boldxi'-\y'$, where $\y'$ is given by $ \y' = (R\cos\theta_{\y},R\sin\theta_{\y},0) $, with
    \[
       \cos\theta_{\y} = \frac{y_{1}}{\sqrt{y_{1}^{2}+y_{2}^{2}}}, \qquad \sin\theta_{\y} = \frac{y_{2}}{\sqrt{y_{1}^{2}+y_{2}^{2}}}.
    \]
    \item Rotate $\boldxi''$ with Rodrigues' rotation formula by an angle $\phi_{\y} - \phi_{\x}$ with respect to the axis $\mathbf{k} \coloneqq (-\sin\theta_{\y'},\cos\theta_{\y'},0)$ (this is the tangent vector at $\y'$ to the main tunnel of the torus of radius $R$), obtaining $\boldxi'''$
    \[
       \boldxi''' = \mathbf{R}_{\phi_{\y} - \phi_{\x}, \mathbf{k}}\,\boldxi'' =  \boldxi'' + \sin\phi_{\mathrm{diff}} \, (\mathbf{k} \times \boldxi'') - (1-\cos\phi_{\mathrm{diff}} ) \, (\mathbf{k}\times \boldxi'') \times \mathbf{k},
    \]
where $ \times $ denotes the standard cross product on two vectors, and
    \[
       \sin\phi_{\mathrm{diff}} \coloneqq \sin( \phi_{\y} - \phi_{\x} ) = \sin\phi_{\y}\cos\phi_{\x} - \cos\phi_{\y}\sin\phi_{\x},
    \]
    \[
       \cos\phi_{\mathrm{diff}} \coloneqq \cos( \phi_{\y} - \phi_{\x} ) = \cos\phi_{\y}\cos\phi_{\x} + \sin\phi_{\y}\sin\phi_{\x},
    \]
    \[
       \cos\phi_{\x} = \frac{c_{\x}}{\sqrt{c_{\x}^{2}+x_{3}^{2}}}, \quad \sin\phi_{\x} = \frac{x_{3}}{\sqrt{c_{\x}^{2}+x_{3}^{2}}},
       \quad \text{with} \quad c_{\x} = \sqrt{x_{1}^{2}+x_{2}^{2}}-R,
    \]
    \[
       \cos\phi_{\y} = \frac{c_{\y}}{\sqrt{c_{\y}^{2}+y_{3}^{2}}}, \quad \sin\phi_{\y} = \frac{y_{3}}{\sqrt{c_{\y}^{2}+y_{3}^{2}}},
       \quad \text{with} \quad c_{\y} = \sqrt{y_{1}^{2}+y_{2}^{2}}-R,
    \]
    \item Translate $\boldxi'''$ back to the torus: $\boldxi_{\y} = \boldxi'''+\y'$.
\end{enumerate}

This procedure is formalized in Algorithm~\ref{algo:parallel_transport}.

%===========================================================================
% ALGORITHM 6: PARALLEL TRANSPORT ON THE TORUS
%===========================================================================
\begin{algorithm}
\SetAlgoLined
 Given torus $ \Torus $, points $ \x, \, \y \in \Torus $, and tangent vector $ \boldxi_{\x} \in \mathrm{T}_{\x}\Torus $\;
 \KwResult{Parallel-transported vector $ \boldxi_{\y} \in \mathrm{T}_{\y}\Torus $.}
 Call Algorithm~\ref{algo:cos_sin_theta} to compute $ \cos \theta_{\x} $ and $ \sin \theta_{\x} $\;
 Call Algorithm~\ref{algo:cos_sin_theta} to compute $ \cos \theta_{\y} $ and $ \sin \theta_{\y} $\;
 $ \cos\theta_{\mathrm{diff}} = \cos\theta_{\y}\cos\theta_{\x} + \sin\theta_{\y}\sin\theta_{\x} $\;
 $ \sin\theta_{\mathrm{diff}} = \sin\theta_{\y}\cos\theta_{\x} - \cos\theta_{\y}\sin\theta_{\x} $\;
    \[
       \boldxi' = \mathbf{R}_{\theta_{\y} - \theta_{\x}, z} \, \boldxi = \begin{pmatrix}
      \cos\theta_{\mathrm{diff}} \, \xi_{1} - \sin\theta_{\mathrm{diff}} \, \xi_{2} \\
      \sin\theta_{\mathrm{diff}} \, \xi_{1} + \cos\theta_{\mathrm{diff}} \, \xi_{2} \\
      \xi_{3}
   \end{pmatrix};
    \]\\
 $ \y' = (R\cos\theta_{\y},R\sin\theta_{\y},0) $\;
 Call Algorithm~\ref{algo:cos_sin_phi} to compute $ \cos \phi_{\x} $ and $ \sin \phi_{\x} $\;
 Call Algorithm~\ref{algo:cos_sin_phi} to compute $ \cos \phi_{\y} $ and $ \sin \phi_{\y} $\;
 $ \mathbf{k} = (-\sin\theta_{\y'},\cos\theta_{\y'},0) $\;
 $ \cos\phi_{\mathrm{diff}} = \cos\phi_{\y}\cos\phi_{\x} + \sin\phi_{\y}\sin\phi_{\x} $\;
 $ \sin\phi_{\mathrm{diff}} = \sin\phi_{\y}\cos\phi_{\x} - \cos\phi_{\y}\sin\phi_{\x} $\;
 $\boldxi''=\boldxi'-\y'$\;
  Use Rodrigues’ rotation formula: $ \boldxi''' = \mathbf{R}_{\phi_{\y} - \phi_{\x}, \mathbf{k}}\,\boldxi'' =  \boldxi'' +  \sin\phi_{\mathrm{diff}} \, (\mathbf{k} \times \boldxi'') - (1- \cos\phi_{\mathrm{diff}}) (\mathbf{k}\times \boldxi'') \times \mathbf{k} $\; 
 Translate $\boldxi'''$ back to the torus: $\boldxi_{\y} = \boldxi'''+\y'$.
 \caption{Parallel transport on the torus $ \Torus $.}\label{algo:parallel_transport}
\end{algorithm}
%===========================================================================

\subsection{Geometry of the power manifold \texorpdfstring{$\powerTorus$}{TEXT}{}}\label{sec:power_manifold}

As mentioned earlier, we aim to minimize the function $E(f) = E(\f_{1},\dots,\f_{n})$, defined in~\eqref{eq:objective_function}, where each point $\f_{\ell}$, $\ell=1,\dots,n$, lives on the same manifold $\Torus$. This leads us to consider the \emph{power manifold} of $n$ tori
\[
   \powerTorus = \underbrace{\Torus \times \dots \times \Torus}_{n~\text{times}},
\]
with the metric of $\Torus$ extended elementwise. The tools that we use to create optimization algorithms on this power manifold are straightforward elementwise extensions of the same tools on the torus $ \Torus $. Given a power torus $\powerTorus$ and a point $\f \in \powerTorus$, the projection onto the tangent space $ \P_{\mathrm{T}_{\f}\powerTorus} \colon \R^{n \times 3} \to \mathrm{T}_{\f}\powerTorus $ is used to compute the Riemannian gradient, as explained in Sect.~\ref{sec:Riem_grad_descent}. The projection onto the power torus $\Pi_{\powerTorus} \colon \R^{n \times 3} \to \powerTorus $ turns points of $\R^{n\times 3}$ into points of $\powerTorus$. Finally, the retraction $\Retraction_{\f} \colon \mathrm{T}_{\f}\powerTorus \to \powerTorus $ maps tangent vectors of defined on $ \mathrm{T}_{\f}\powerTorus $ into points on $\powerTorus$.

\section{Riemannian optimization algorithms}\label{sec:riemannian_algorithms}

We describe the algorithms used in this paper: Riemannian gradient descent, projected gradient descent, Riemannian conjugate gradient, and projected conjugate gradient. We regard the projected gradient method as an approximation of the Riemannian gradient method, and the projected conjugate gradient method as an approximation of the Riemannian conjugate gradient method. These algorithms share the same algorithmic components, mainly projections and retractions.

In all the cases, the initial torus mapping is computed by minimizing the authalic energy of the mapping in variables of the planar fundamental domain \protect{\cite{Yueh:2020}}, which is a modification of the holomorphic differential method introduced in \protect{\cite{GuYa02}}.

\subsection{Riemannian gradient descent method on \texorpdfstring{$\powerTorus$}{TEXT}} \label{sec:Riem_grad_descent}

The Riemannian gradient descent (RGD) method is the Riemannian generalization of the steepest (or gradient) descent method. The main feature of the Riemannian gradient descent method is the calculation of the Riemannian gradient of the objective function as a projection of the Euclidean gradient onto the tangent space $ \mathrm{T}_{\f^{(k)}}\Torus $; see Figure~\ref{fig:torus_riem_grad}. 

In the RGD method, the descent direction is defined as the negative of the Riemannian gradient; the new iterate is computed by a line searching along this direction and using retractions.
Algorithm~\ref{algo:RGD} provides a pseudocode for the RGD method on the power torus $\powerTorus$.

%=========================================
% TIKZ DRAWING: RIEM. GRAD. ON TORUS
%=========================================
\begin{figure}[htbp]
  \centering
  \includegraphics[width=0.55\textwidth]{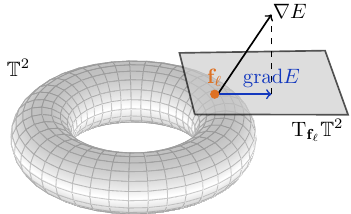}
\caption{Euclidean and Riemannian gradients of a function $ E \colon \Torus \to \R $.}
\label{fig:torus_riem_grad}
\end{figure}
%=========================================

%===========================================================================
% ALGORITHM 7: RGD
%===========================================================================
\begin{algorithm}
\SetAlgoLined
 Given objective function $E$, power manifold $\powerTorus$, initial iterate $\f^{(0)}\in \powerTorus$, projector $ \P_{\mathrm{T}_{\f}\powerTorus} $ from $\R^{n\times 3}$ to $\mathrm{T}_{\f}\powerTorus$, retraction $\Retraction_{\f}$ from $\mathrm{T}_{\f}\powerTorus$ to $\powerTorus$\;
 \KwResult{Sequence of iterates $\lbrace f^{(k)} \rbrace$.}
 $k \leftarrow 0$\;
 \While{$f^{(k)}$ does not sufficiently minimizes $E$}{
     Compute the Euclidean gradient of the objective function $\nabla E(f^{(k)})$ \;
     Compute the Riemannian gradient as $ \grad E(f^{(k)}) = \P_{\f^{(k)}} \! \big( \nabla E(f^{(k)}) \big)$\;
     Choose the anti-gradient direction $ \boldd^{(k)} = -\grad E(f^{(k)}) $\;
     Compute a step size $\alpha_{k} > 0$ with line-search that satisfies the sufficient decrease condition\;
     Set $\f^{(k+1)} = \Retraction_{\f^{(k)}}(\alpha_{k} \boldd^{(k)})$\;
     $ k \leftarrow k + 1 $\;
 }
 \caption{The RGD method on $\powerTorus$.}\label{algo:RGD}
\end{algorithm}
%===========================================================================

The RGD method has theoretically guaranteed convergence: we refer the reader to~\protect{\cite[\S 4]{Sutti_Yueh:2024}} and references therein for a review of the known convergence results.
See also \protect{\cite[Theorem 4.20]{Boumal:2023}}.

\subsection{Projected gradient method}
The projected gradient method (PGM) is like the classical gradient method in Euclidean space, with the additional step of the projection onto the manifold, as discussed in Sect.~\ref{sec:proj_point_onto_torus} and summarized in Algorithm~\ref{algo:projection_on_torus}.
Given an iterate $\f^{(k)}$ in $\powerTorus$, the step $\alpha_{k}$ is obtained by searching the path
\[
   \f^{(k)}(\alpha) \coloneqq \Pi_{\powerTorus}\left( \f^{(k)} - \alpha \nabla E(\f^{(k)}) \right),
\]
where $ \Pi_{\powerTorus} \colon \R^{n \times 3} \to \powerTorus $ is the projection onto $ \powerTorus $, as discussed in Sect.~\ref{sec:proj_point_onto_torus}.
Given a step $ \alpha_{k} $, computed according to the line-search technique discussed in Appendix~\ref{sec:line_search}, the next iterate is defined by
\[
   \f^{(k+1)} \coloneqq \f^{(k)}(\alpha_k) = \Pi_{\powerTorus}\left( \f^{(k)} - \alpha_{k} \nabla E(\f^{(k)}) \right).
\]

The pseudocode for the projected gradient method is given in Algorithm~\ref{algo:PG}.

%===========================================================================
% ALGORITHM 3: PROJECTED GRADIENT METHOD
%===========================================================================
\begin{algorithm}
\SetAlgoLined
 Given objective function $E$, power manifold $\powerTorus$, initial iterate $\f^{(0)}\in \powerTorus$, projector $ \Pi_{\powerTorus} $ from $\R^{n\times 3}$ to $ \powerTorus $\;
 $k \leftarrow 0$\;
 \While{$f^{(k)}$ does not sufficiently minimizes $E$}{
     Compute the Euclidean gradient of the objective function $\nabla E(f^{(k)})$\;
     Set  $ \boldd^{(k)} = -\nabla E(f^{(k)}) $\;
     Compute a step size $\alpha_{k} > 0$ with a line-search procedure that satisfies the sufficient decrease condition\;
     Set $ \f^{(k+1)} = \Pi_{\powerTorus} \big(\f^{(k)} + \alpha_{k} \boldd^{(k)} \big) $ using Algorithm~\ref{algo:projection_on_torus}\;
     $ k \leftarrow k + 1 $\;
 }
 \caption{The projected gradient method on $\powerTorus$.}\label{algo:PG}
\end{algorithm}
%===========================================================================

For theoretical purposes, we may regard the PGM as an approximate version of the RGD method. The main differences between the two algorithms are that:
\begin{itemize}
    \item PGM uses the Euclidean gradient, not the Riemannian gradient, so the line-search procedure is performed in the embedding space $ \R^{n \times 3} $.
    \item The new iterate in PGM is computed projecting directly onto $\powerTorus$; there is no intermediate step involving the tangent space.
\end{itemize}
The numerical experiments of Sect.~\ref{sec:numerical_experiments} show that, in practice, the PGM always converges.

\subsection{Riemannian conjugate gradient method}\label{sec:rcg_method}

The Riemannian conjugate gradient (RCG) method is discussed in~\protect{\cite{RingWirth:2012}}; we also refer the reader to the more recent works of Sato et al.~\protect{\cite{SatoIwai:2015,Sato:2016,Sato:2021,Sato:2022}} for a comprehensive study of RCG methods.
Besides the components from Riemannian geometry that are already used in RGD, the main addition is the introduction of parallel transport.

The RCG method uses the steepest descent direction at the first iteration. Then, for all the subsequent iterations, the update direction is chosen to be a linear combination of the Riemannian gradient at the current iterate and the parallel-transported previous direction. It is formalized as follows
\[
   \boldd^{(k)} = -\grad E(f^{(k)}) + \beta_{k}^{\mathrm{FR}} \, \mathcal{T}_{\x\to \y} \, \boldd^{(k-1)},
\]
where the scalar $\beta_{k}^{\mathrm{FR}}$ is computed as the ratio of two norms
\begin{equation}\label{eq:fletcher_reeves}
   \beta_{k}^{\mathrm{FR}} = \frac{\|\grad E(f^{(k)})\|^{2}}{\|\grad E(f^{(k-1)})\|^{2}},
\end{equation}
and $ \mathcal{T}_{\x\to \y}\colon \mathrm{T}_{\x}\Torus \to \mathrm{T}_{\y}\Torus $ is the parallel transport from $\mathrm{T}_{\x}\Torus$ to $\mathrm{T}_{\x}\Torus$, described in Sect.~\ref{sec:parallel_transp}.
Algorithm~\ref{algo:RCG} provides a pseudocode for the RCG method.

%===========================================================================
% ALGORITHM 4: RIEMANNIAN CONJUGATE GRADIENT METHOD
%===========================================================================
\begin{algorithm}
\SetAlgoLined
 Given objective function $E$, power manifold $\powerTorus$, initial iterate $\f^{(0)}\in \powerTorus$, projector $ \P_{\f} $ from $\R^{n\times 3}$ to $\mathrm{T}_{\f}\powerTorus$, retraction $\Retraction_{\f}$ from $\mathrm{T}_{\f}\powerTorus$ to $\powerTorus$\;
 \KwResult{Sequence of iterates $\lbrace f^{(k)} \rbrace$.}
 $k \leftarrow 0$\;
Compute the Euclidean gradient of the objective function $\nabla E(f^{(k)})$\;
Compute the Riemannian gradient as $ \grad E(f^{(k)}) = \P_{\f^{(k)}} \! \big( \nabla E(f^{(k)}) \big)$ using Algorithm~\ref{algo:proj_onto_tg_space}\;
Set $ \boldd^{(k)} =  -\grad E(f^{(k)}) $\;
 \While{$f^{(k)}$ does not sufficiently minimizes $E$}{
     Compute a step size $\alpha_{k} > 0$ with a line-search procedure that satisfies the sufficient decrease condition\;
     Set $\f^{(k+1)} = \Retraction_{\f^{(k)}}(\alpha_{k} \boldd^{(k)})$\;
     Compute the Euclidean gradient of the objective function $\nabla E(f^{(k+1)})$ \;
     Compute the Riemannian gradient as $ \grad E(f^{(k+1)}) = \P_{\f^{(k+1)}}\! \big( \nabla E(f^{(k+1)}) \big)$ using Algorithm~\ref{algo:proj_onto_tg_space}\;
     Compute the parallel transport $ \mathcal{T}_{\f^{(k)}\to \f^{(k+1)}} $ of $ \boldd^{(k)} $ using Algorithm~\ref{algo:parallel_transport}\;
     Compute the scalar $\beta_{k}^{\mathrm{FR}} = \frac{\|\grad E(f^{(k+1)})\|^{2}}{\|\grad E(f^{(k)})\|^{2}} $\;
     Set the new direction $  \boldd^{(k+1)} = -\grad E(f^{(k+1)}) + \beta_{k}^{\mathrm{FR}} \, \mathcal{T}_{\f^{(k)}\to \f^{(k+1)}} \, \boldd^{(k)} $\;
     $ k \leftarrow k + 1 $\;
 }
 \caption{The Riemannian conjugate gradient method on $\powerTorus$.}\label{algo:RCG}
\end{algorithm}
%===========================================================================

The computation of $ \beta_{k}^{\mathrm{FR}} $ is crucial for the performance of the method.
The above ratio~\eqref{eq:fletcher_reeves} is due to Fletcher and Reeves~\protect{\cite{Fletcher:1964}} and it is the one that we use in this work. Other choices are possible, like those due to Polak and Ribière~\protect{\cite{Polak:1969}} or Hestenes and Stiefel~\protect{\cite{Hestenes:1952}}. We also experimented with these rules, but the experiments show that while keeping all other parameters the same, the Fletcher--Reeves update rule provided the best results. 

Convergence results for the Riemannian conjugate gradient methods based on the Fletcher--Reeves ratio~\eqref{eq:fletcher_reeves} are discussed in~\protect{\cite[\S 6.1.1]{Sato:2022}} and~\protect{\cite[\S 4.4]{Sato:2021}}.

\subsection{Projected conjugate gradient method}
The nonlinear conjugate gradient method for solving optimization problems was introduced by Fletcher and Reeves in~\protect{\cite{Fletcher:1964}}.
A pseudocode of the conjugate gradient method with the Fletcher--Reeves update rule is given, e.g., in~\protect{\cite[p.~121]{NW:2006}}, from which we adapted our pseudocode for the projected conjugate gradient (PCG) method; see Algorithm~\ref{algo:PCG} below. The main difference with respect to the CG method is the additional projection step onto the torus as described in Algorithm~\ref{algo:projection_on_torus}.

%===========================================================================
% ALGORITHM 2: PROJECTED CONJUGATE GRADIENT METHOD
%===========================================================================
\begin{algorithm}
\SetAlgoLined
 Given objective function $E$, power manifold $\powerTorus$, initial iterate $\f^{(0)}\in \powerTorus$, projector $ \Pi $ from $\R^{n\times 3}$ to $\powerTorus$\;
 $k \leftarrow 0$\;
 Compute the Euclidean gradient of the objective function $\nabla E_{k} \equiv \nabla E(f^{(k)})$\;
 Set  $ \boldd^{(k)} = -\nabla E(f^{(k)}) $\;
 \While{$\f^{(k)}$ does not sufficiently minimizes $E$}{
     Compute a step size $\alpha_{k} > 0$ with a line-search procedure that satisfies the sufficient decrease condition\;
     Set $\f^{(k+1)} = \Pi_{\powerTorus}\left( \f^{(k)} + \alpha_{k} \, \boldd^{(k)} \right)$\;
     Compute the Euclidean gradient of the objective function $\nabla E_{k+1} \coloneqq \nabla E(\f^{(k+1)})$\;
     $ \beta_{k+1}^{\mathrm{FR}} \leftarrow \frac{\nabla E_{k+1}\tr\nabla E_{k+1}}{\nabla E_{k}\tr\nabla E_{k}} $\;
     Set the anti-gradient direction $ \boldd^{(k+1)} \leftarrow -\nabla E(\f^{(k+1)}) + \beta_{k+1}^{\mathrm{FR}} \boldd^{(k)} $\;
     $ k \leftarrow k + 1 $\;
 }
 \caption{The projected conjugate gradient method on $\powerTorus$.}\label{algo:PCG}
\end{algorithm}
%===========================================================================

We may regard the PCG method as an approximate version of the RCG method. The main differences between the two algorithms are that:
\begin{itemize}
    \item PCG uses the Euclidean gradient, not the Riemannian gradient.
    \item The line-search procedure is performed in the embedding space $ \R^{n \times 3} $, not on the tangent space.
    \item The new iterate is obtained by projecting directly onto $\powerTorus$; there is no intermediate step involving the tangent space.
    \item The scalar $ \beta_{k+1}^{\mathrm{FR}} $ is computed from Euclidean gradients, not from Riemannian gradients.
\end{itemize}
The numerical experiments of Sect.~\ref{sec:numerical_experiments} show that, in practice, the PGM always converges.

\section{Numerical experiments} \label{sec:numerical_experiments}

In this section, we report and discuss the numerical results for genus-one area-preserving mapping for the mesh models shown in Figure~\ref{fig:All_mesh_models}. The mesh models are obtained from several online sources, among them {\em 180 Wrapped Tubes}\footnote{\url{https://pub.ista.ac.at/~edels/Tubes}}, Keenan's 3D Model Repository\footnote{\url{https://www.cs.cmu.edu/~kmcrane/Projects/ModelRepository}}, and CGTrader\footnote{\url{https://www.cgtrader.com}}.

To obtain more uniform meshes, some models were remeshed using the {\em JIGSAW} mesh generators \protect{\cite{Engw14,EnIv14,Engw15,EnIv16,Engw16}}.
In Figure~\ref{fig:All_mesh_models}, the mesh models are ordered according to increasing number of faces and vertices, from left to right and top to bottom.

%================================================================
% MATLAB PLOTS: ALL THE MESH MODELS
%================================================================
\begin{figure}[htbp]
   \centering
   \resizebox{\textwidth}{!}{
      \begin{tabular}{lcccc}
         \hline\\[-0.2cm]
         Model Name  & Knot & Triangle Torus 7/3 & Triangle Torus 10/3 & Bob \\ 
         \# Faces    & 2,880 & 3,360 & 3,500 & 6,174 \\ 
         \# Vertices & 1,440 & 1,680 & 1,750 & 3,087 \\[0.25cm]
         & \includegraphics[height=3cm]{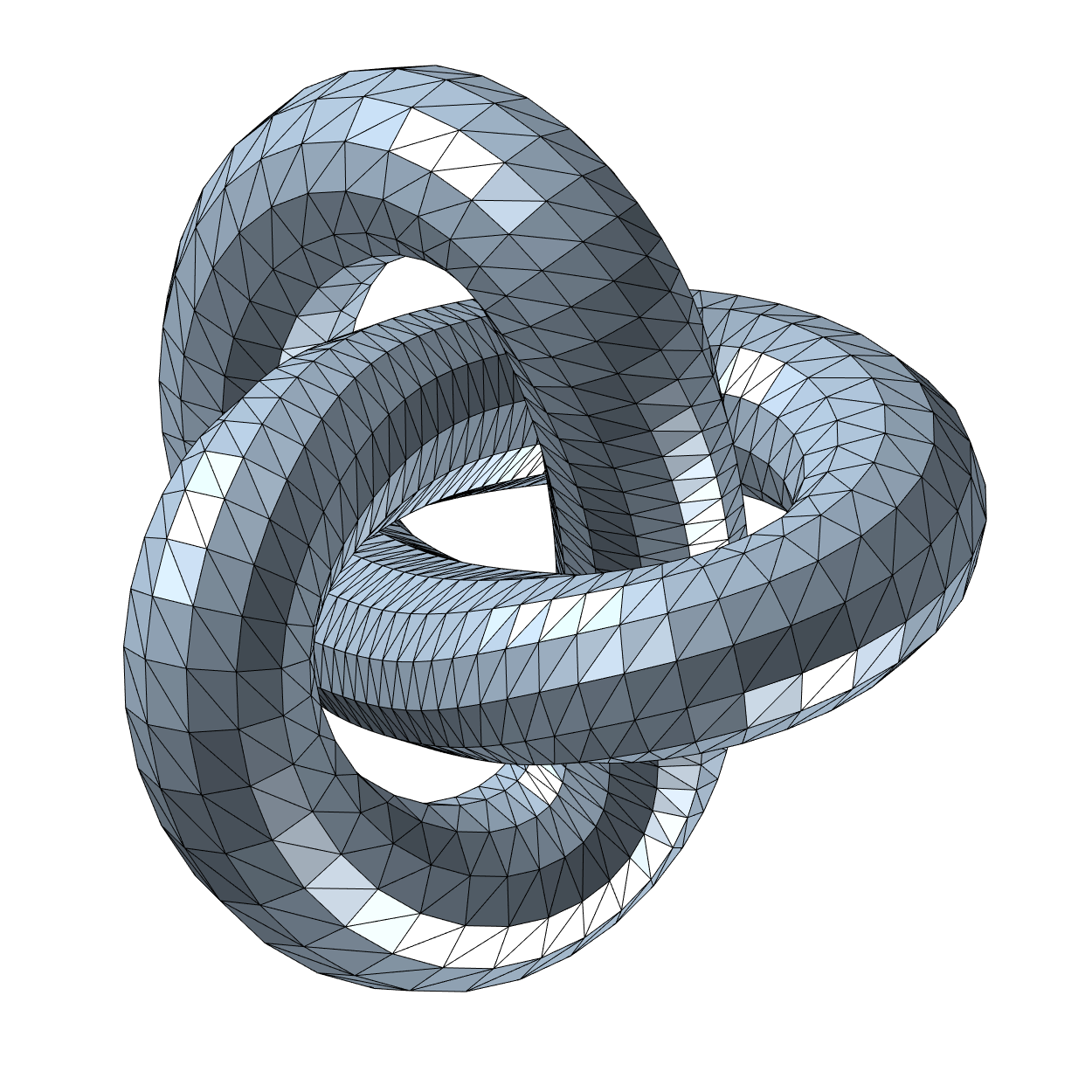} &
         \includegraphics[height=3cm]{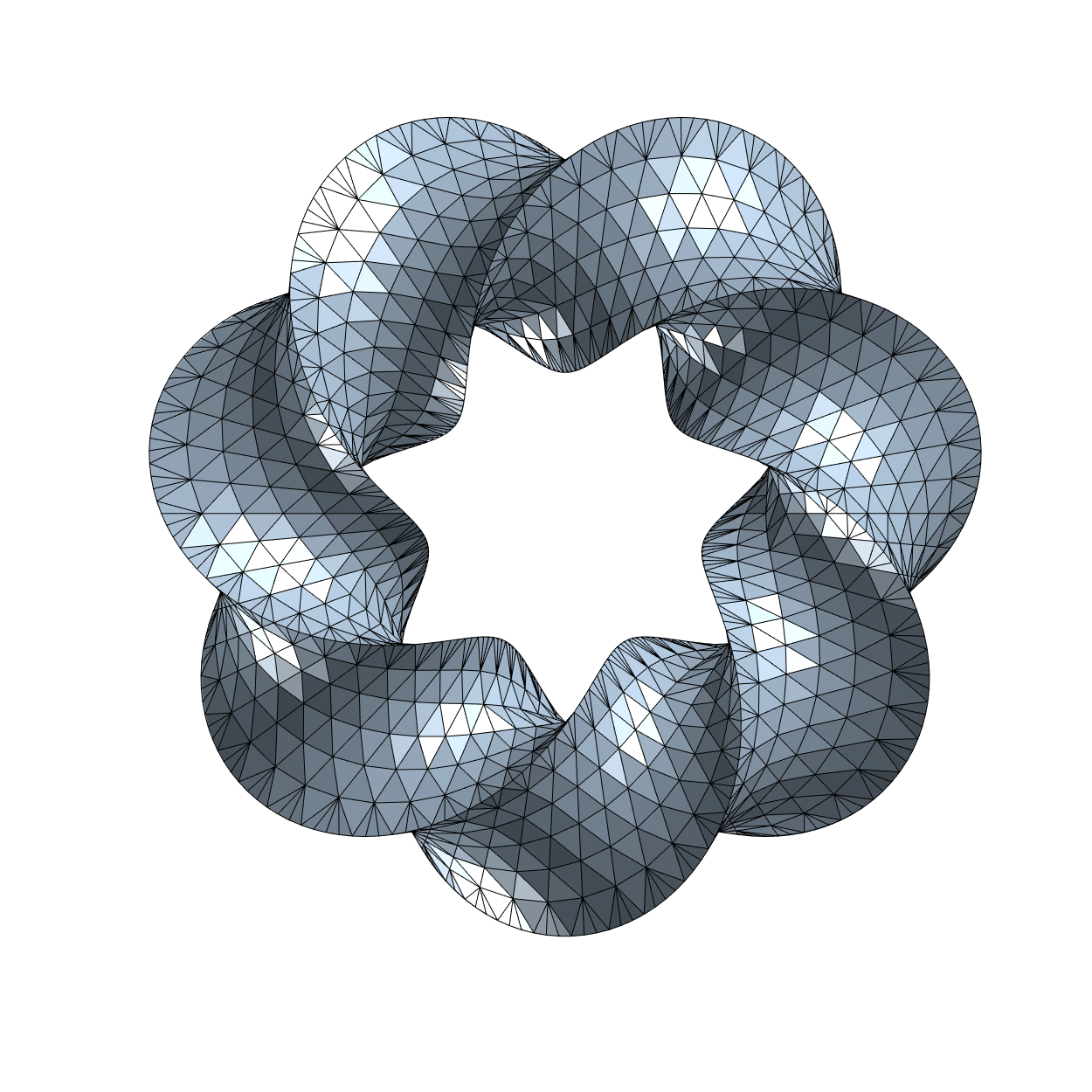} & \includegraphics[height=3cm]{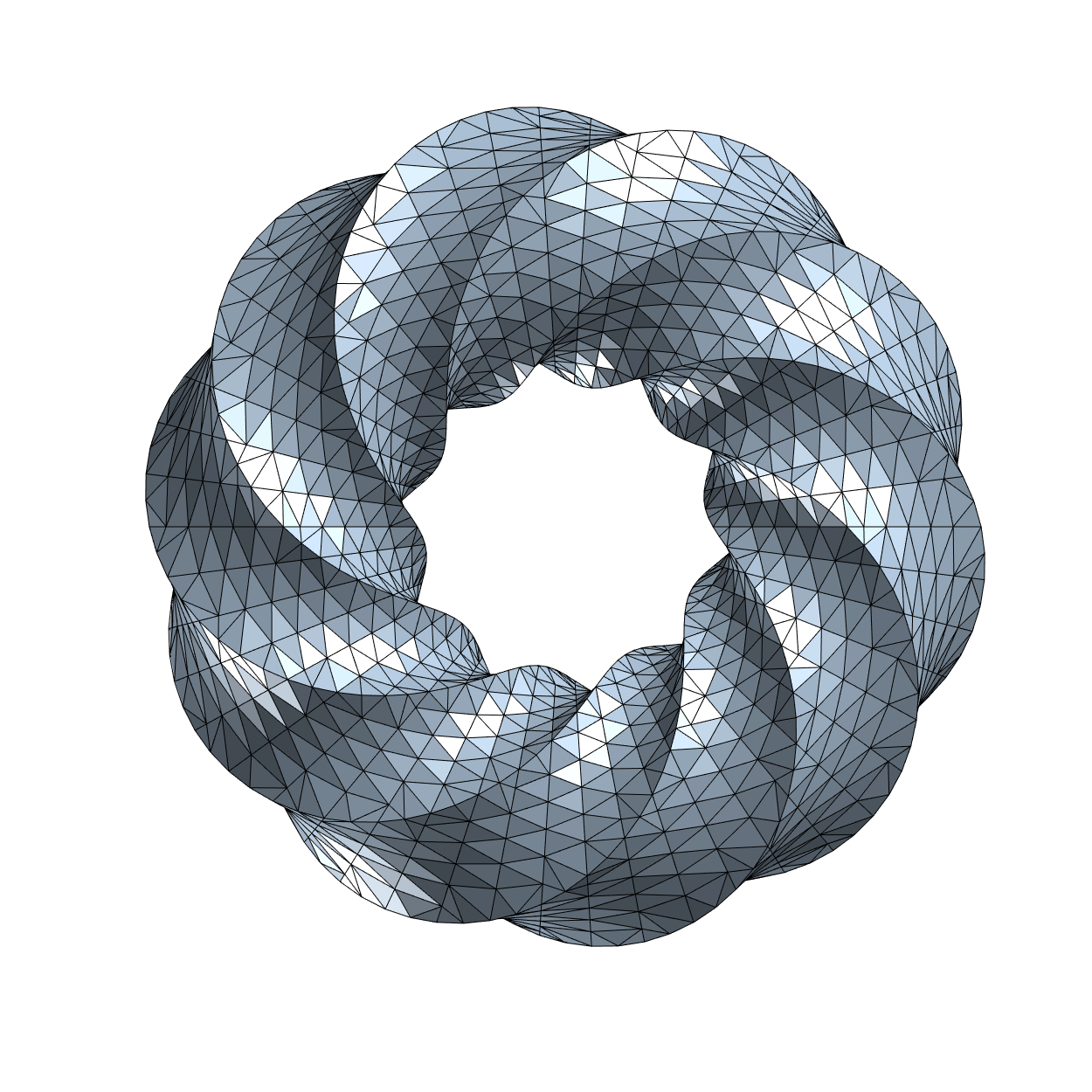} &  \includegraphics[height=3cm]{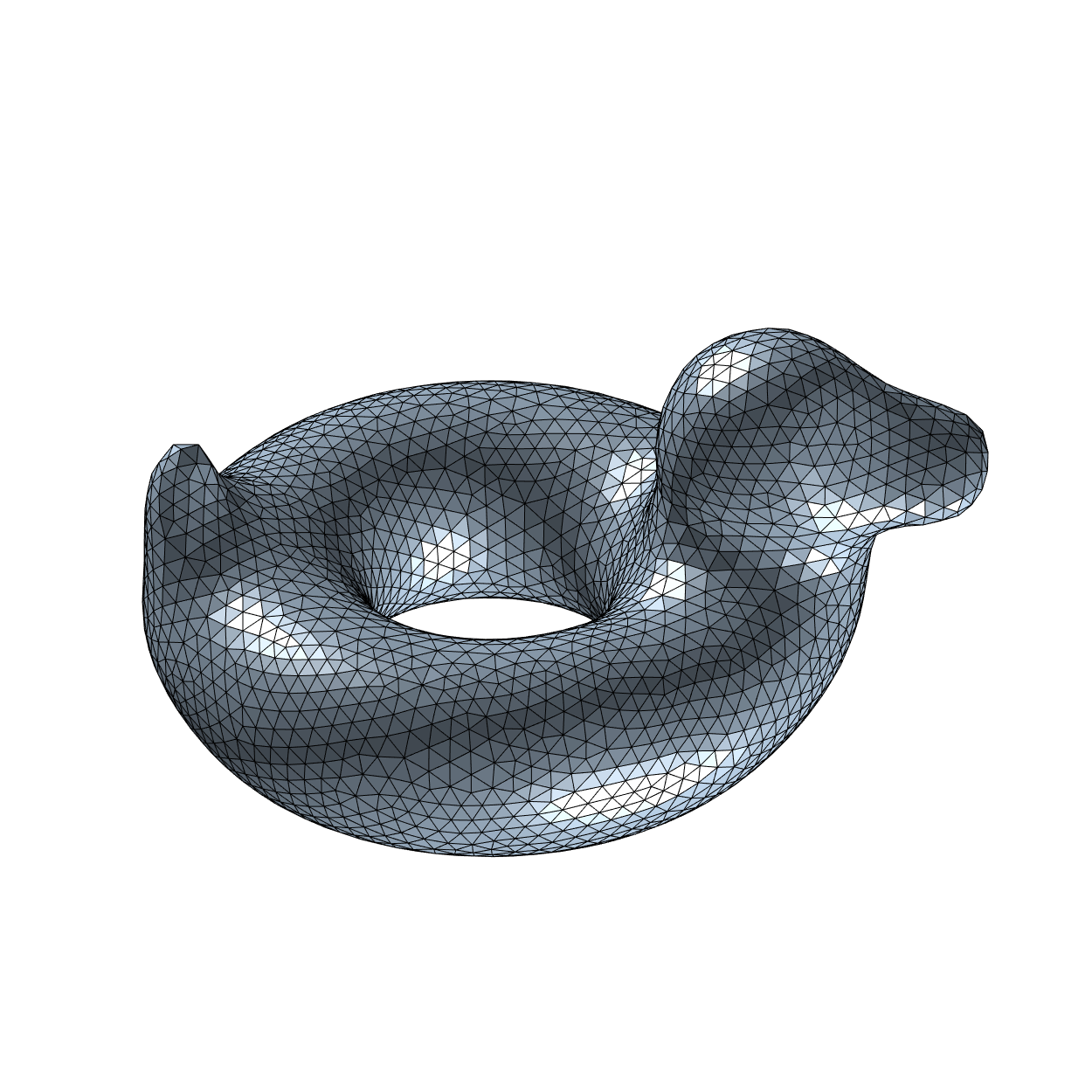} \\ \hline
        \\\hline\\[-0.2cm]
        Model Name  & Square Knot (1, 8, 0) & Circle Knot (3, 5/3) & Vertebrae & Kitten \\ 
        \# Faces    & 11,200 & 12,000 & 16,420 & 20,000 \\ 
        \# Vertices &  5,600 &  6,000 &  8,210 & 10,000 \\[0.25cm]
        & \includegraphics[height=3cm]{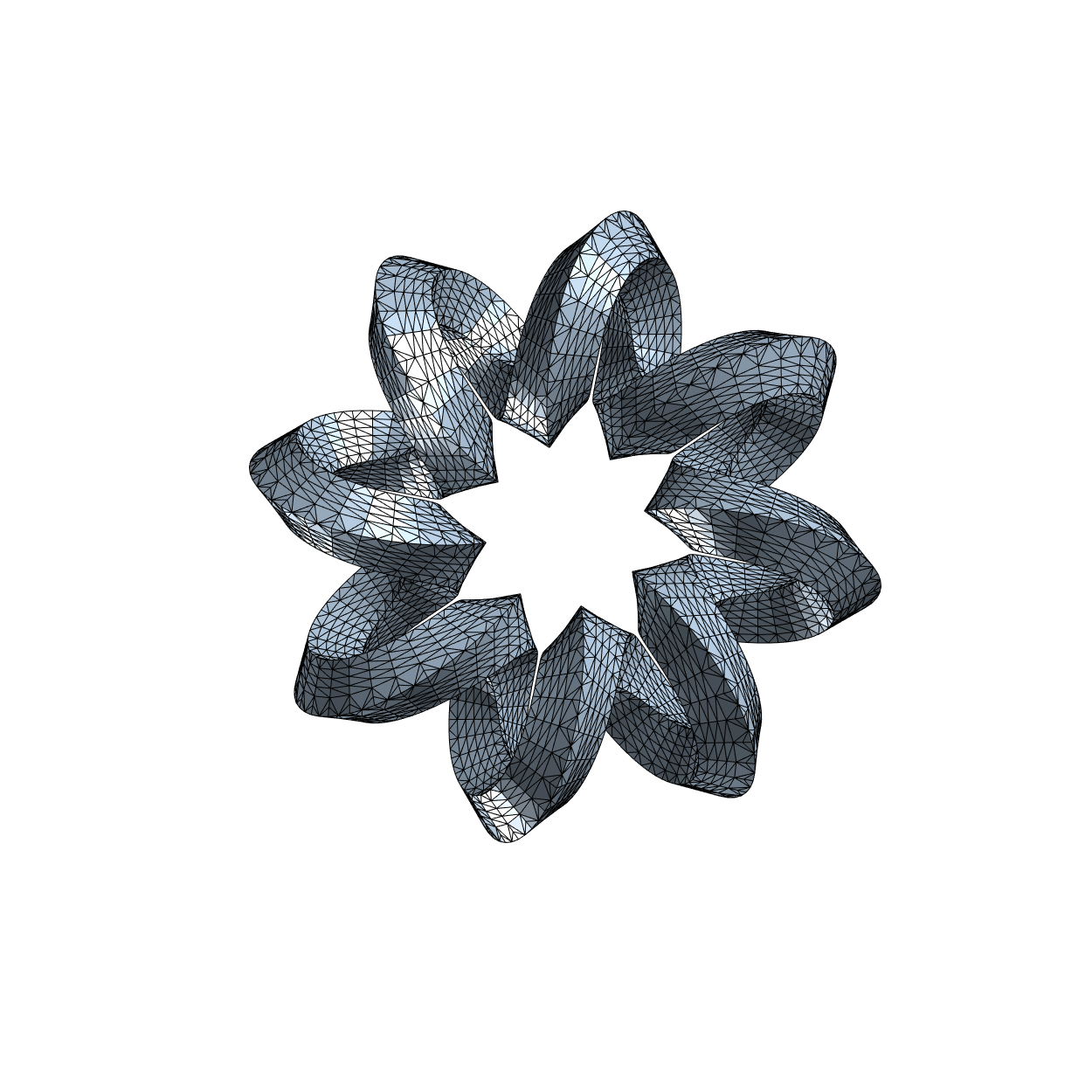} & \includegraphics[height=3cm]{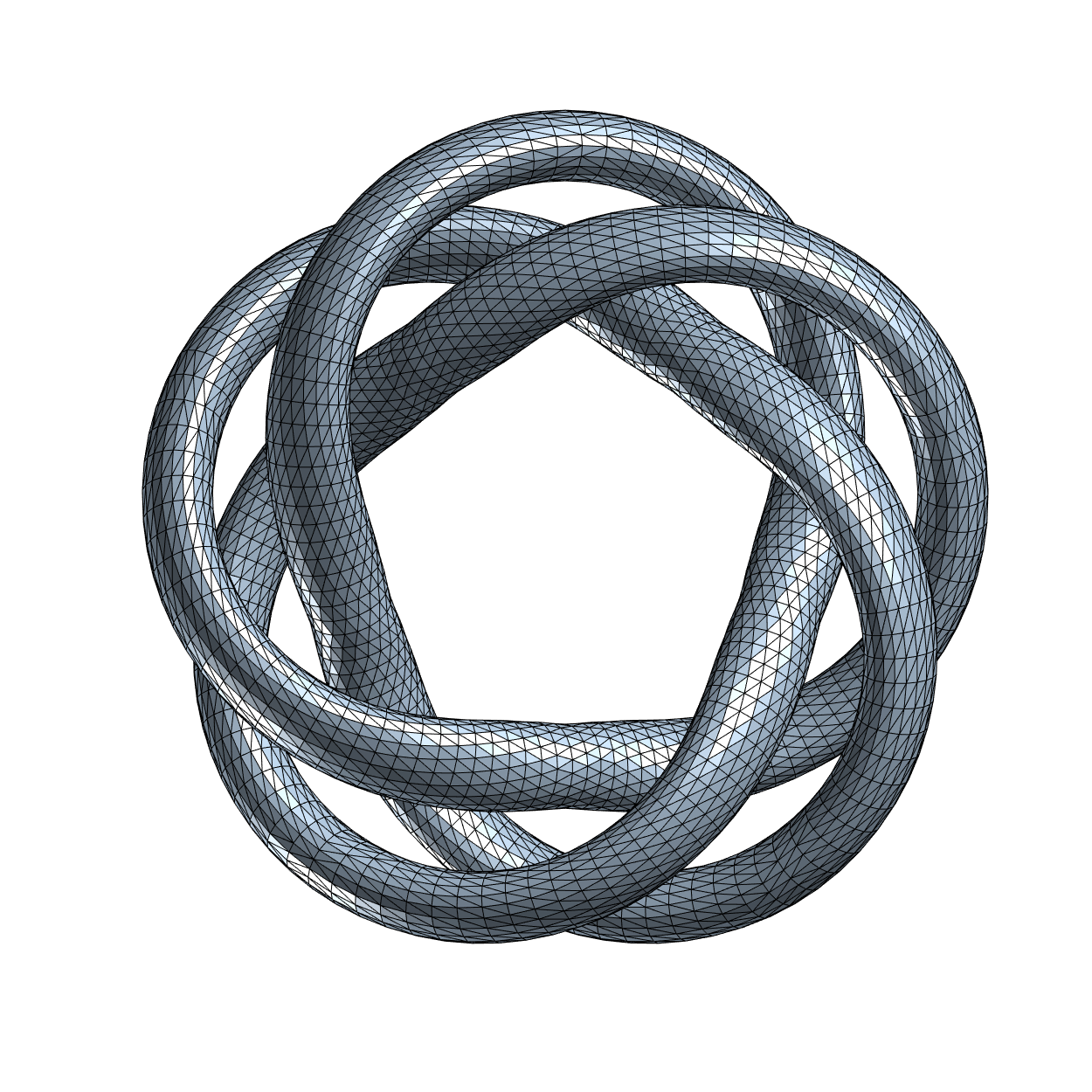} & \includegraphics[height=3cm]{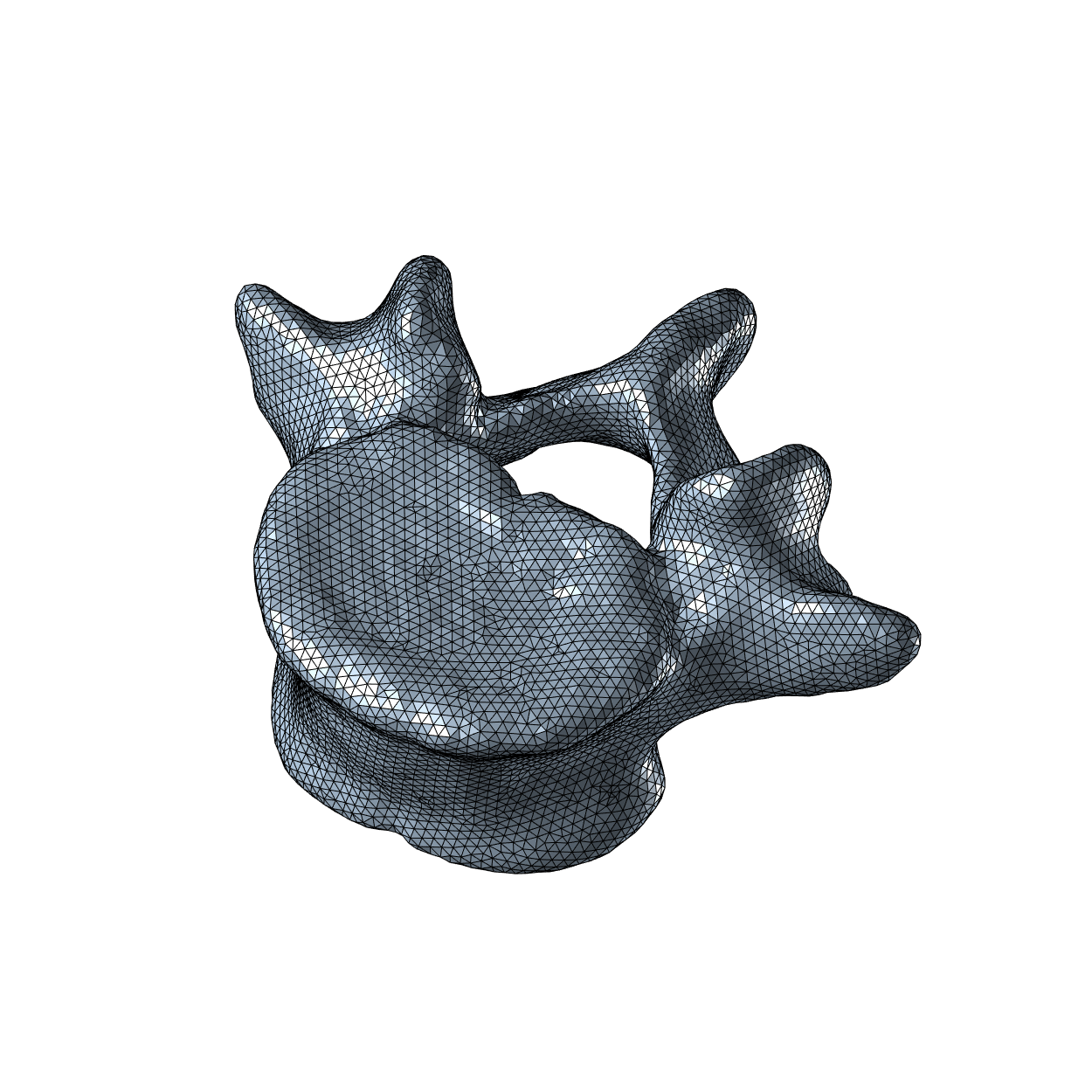} & \includegraphics[height=3cm]{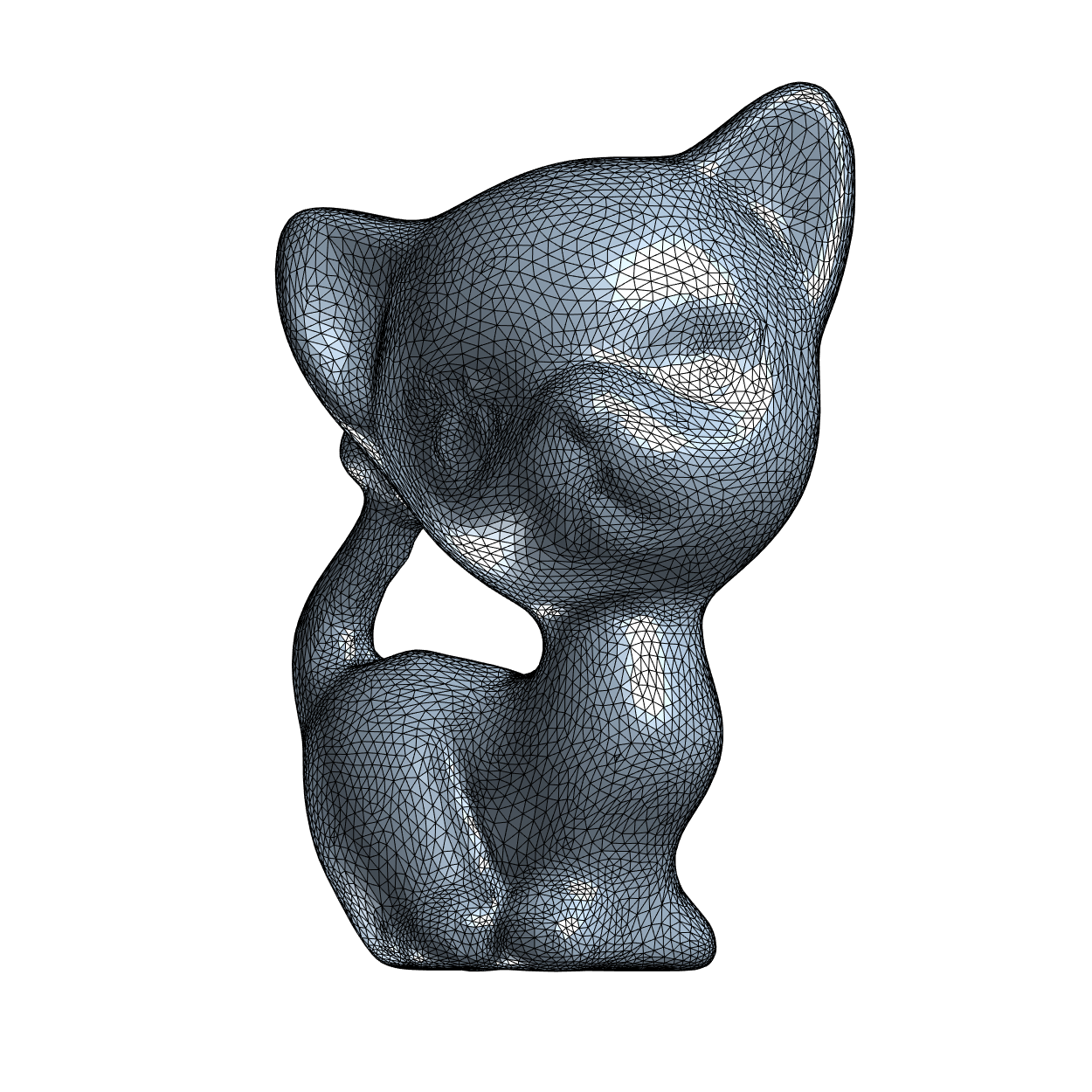}  \\ \hline
        \\\hline\\[-0.2cm]
        Model Name  & Cogwheel & Chess Horse & Rusted Gear & Meander Ring \\
        \# Faces    &  27,228 & 46,016 & 50,150 & 63,208 \\ 
        \# Vertices &  13,614 & 23,008 & 25,075 & 31,604  \\[0.25cm] & \includegraphics[height=3cm]{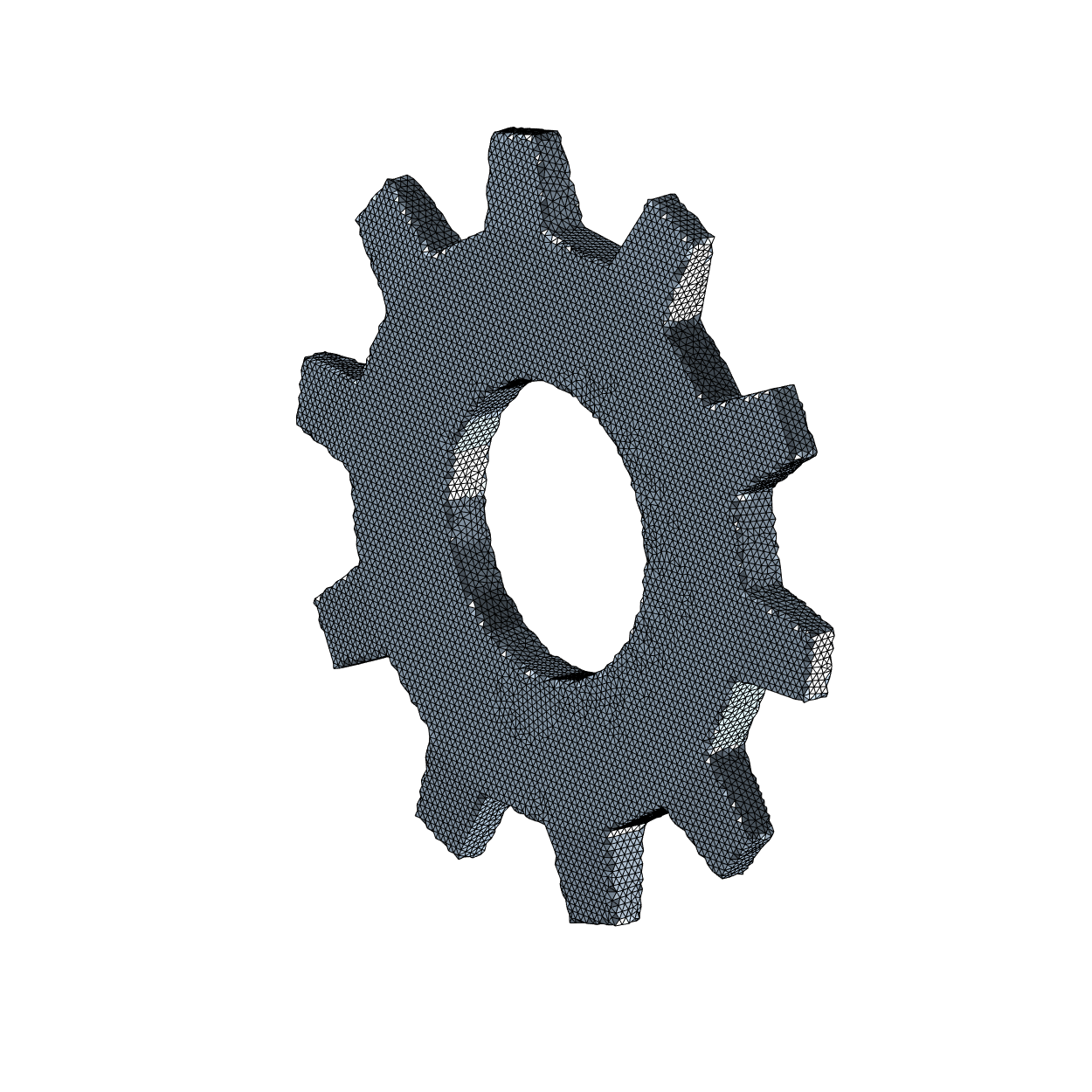} & \includegraphics[height=3cm]{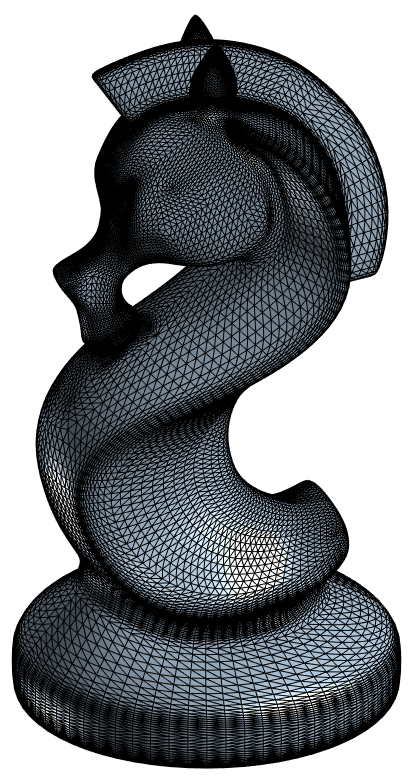} & \includegraphics[height=3cm]{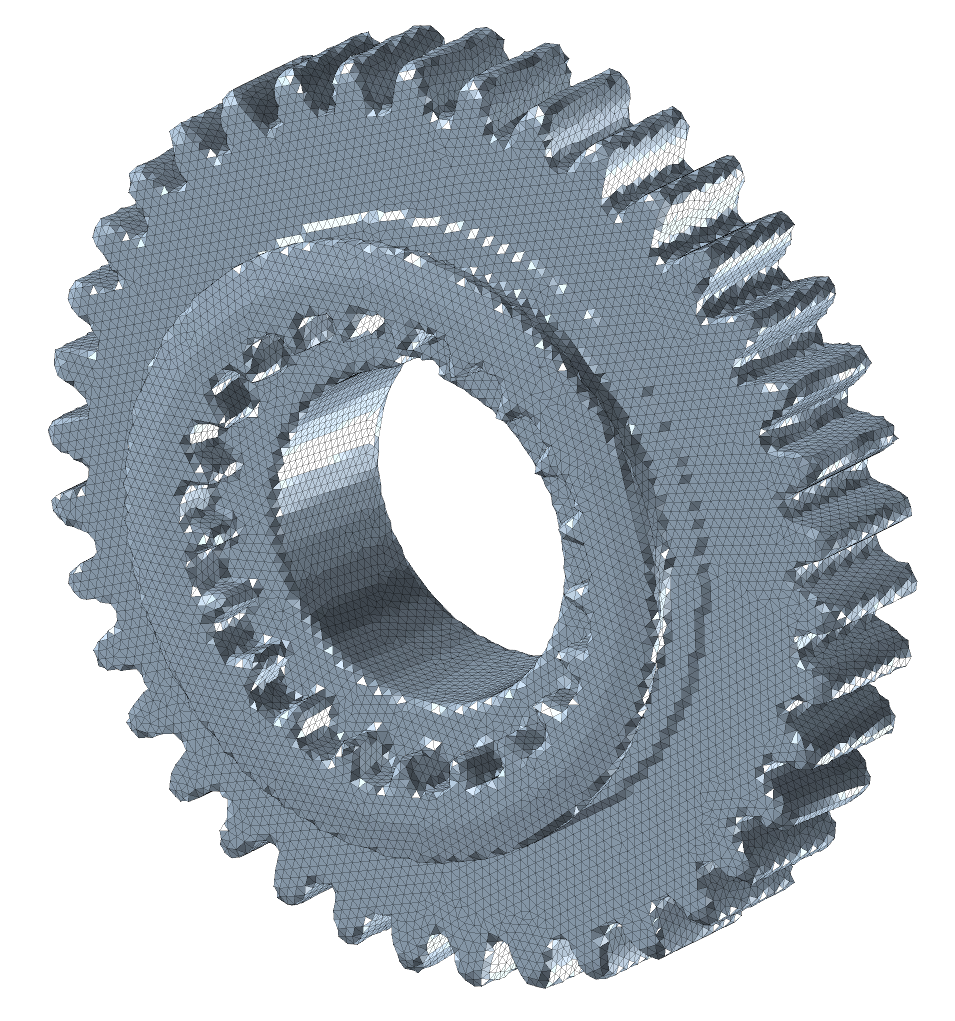} & \includegraphics[height=3cm]{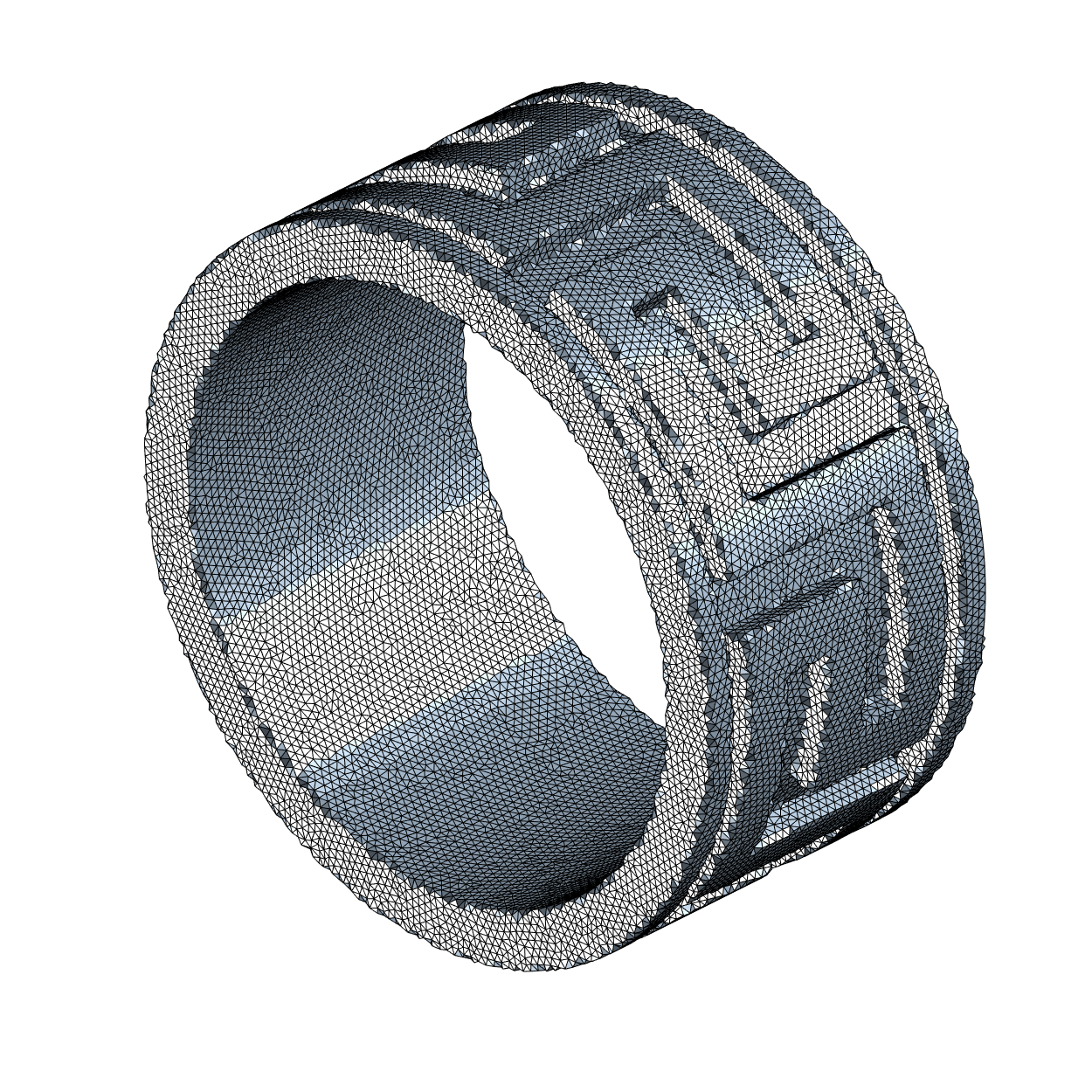}  \\ \hline
        \\\hline\\[-0.2cm]
        Model Name  & Lumbar 2 & Lumbar 4 & Thoracic 10 & Thoracic 12 \\
        \# Faces    & 300,000 & 300,000 & 300,000 & 300,040 \\ 
        \# Vertices & 150,000 & 150,000 & 150,000 & 150,020 \\[0.25cm] & \includegraphics[height=4cm]{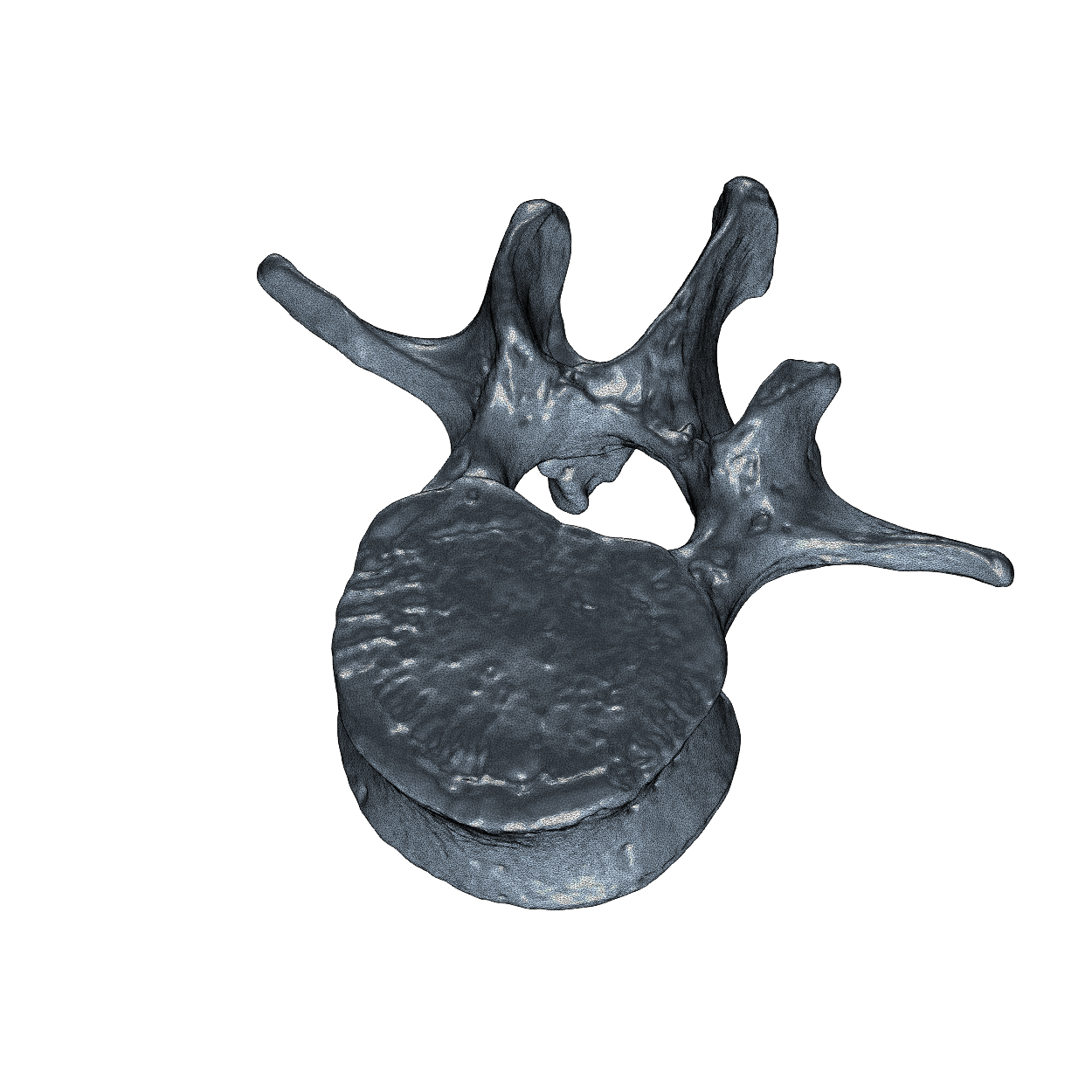} & \includegraphics[height=4cm]{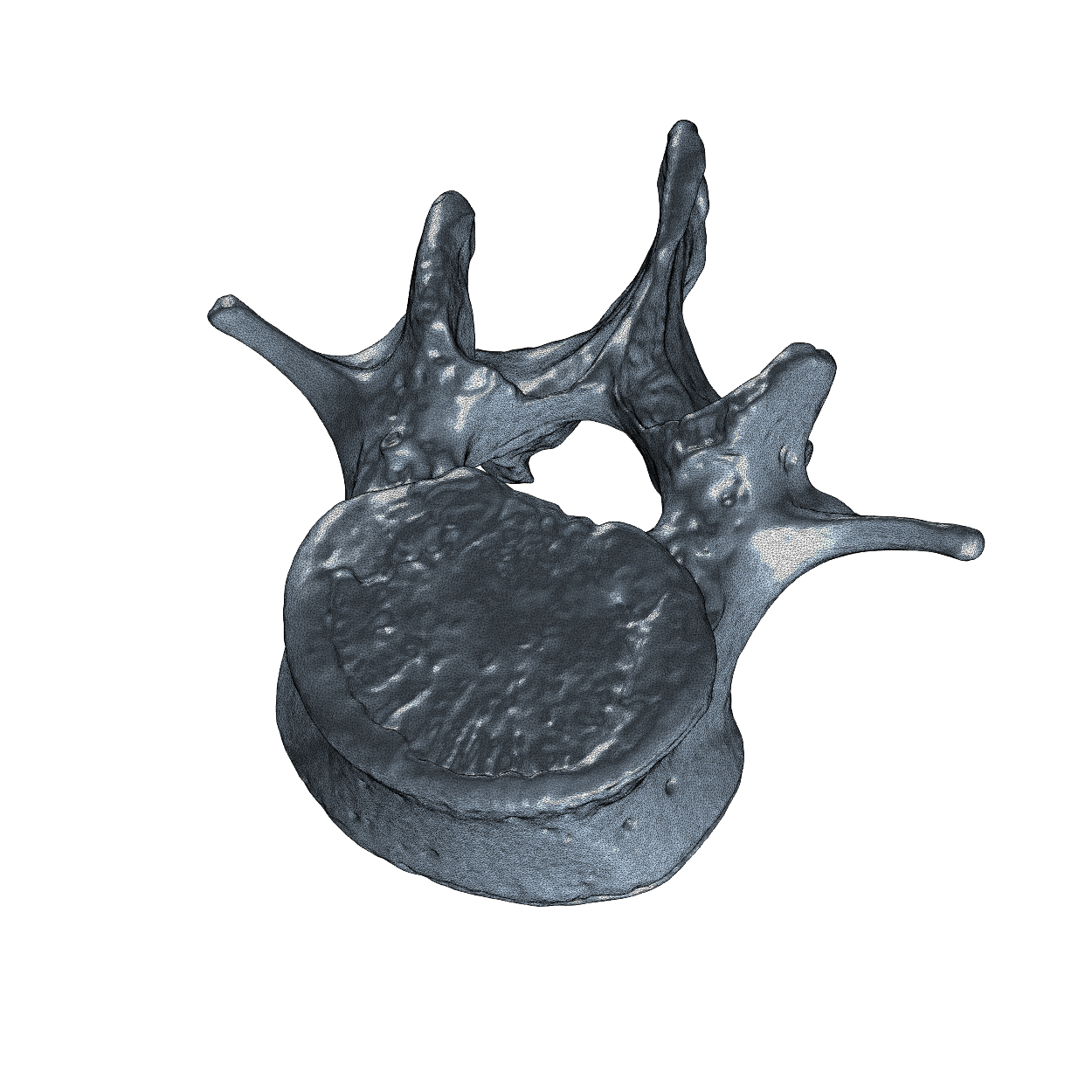} & \includegraphics[height=4cm]{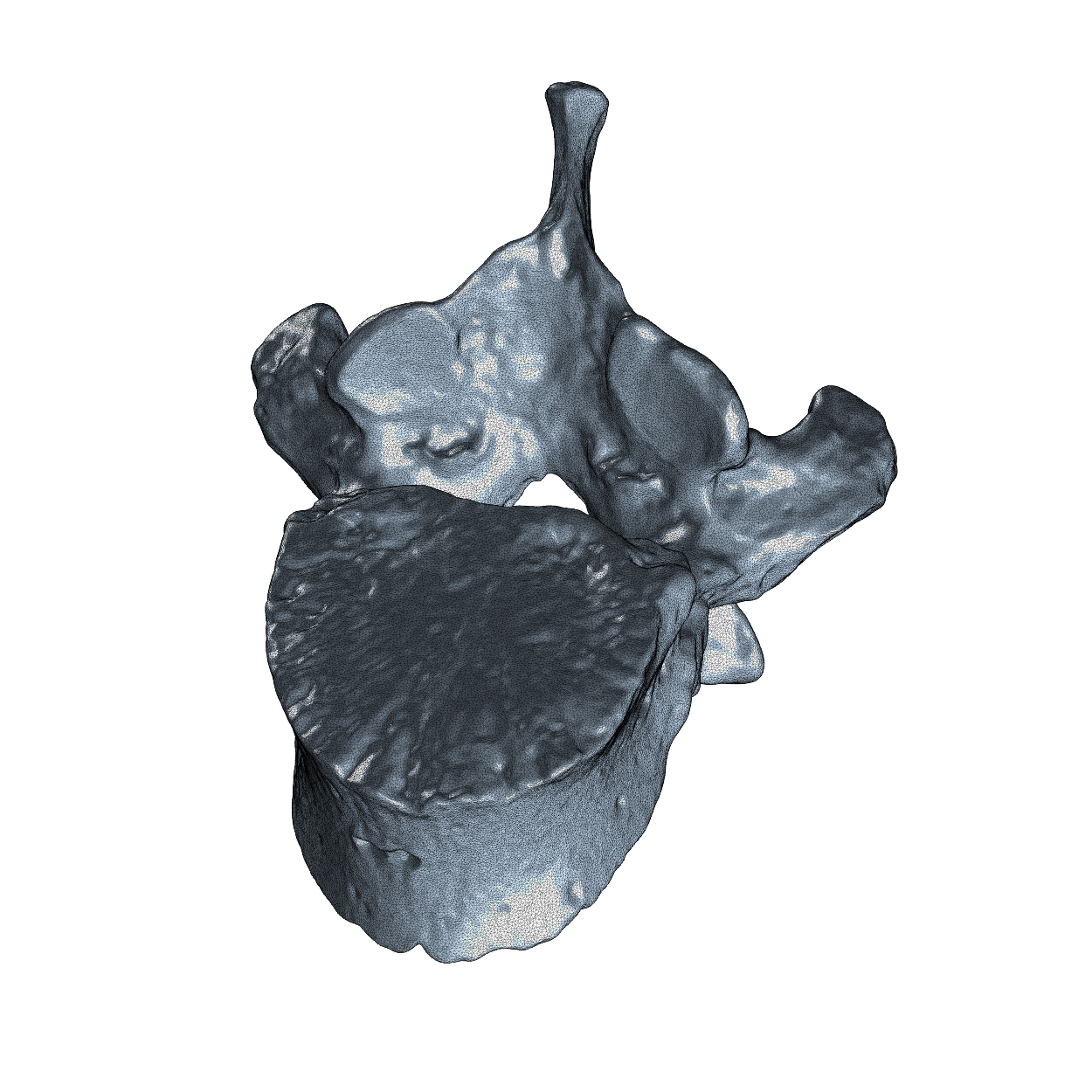} & \includegraphics[height=4cm]{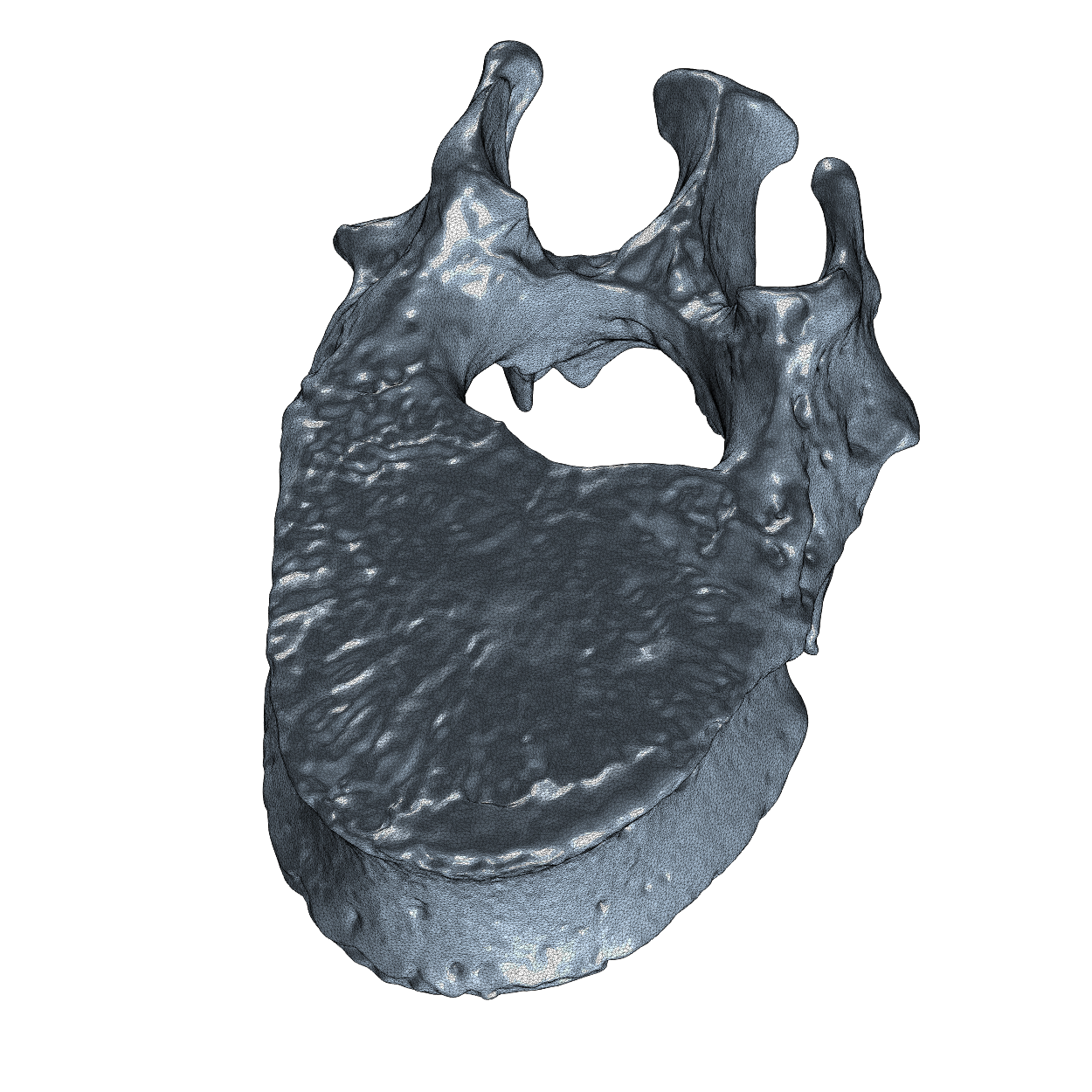} \\
        \hline
    \end{tabular}
}
\caption{The benchmark mesh models used in this paper.}
\label{fig:All_mesh_models}
\end{figure}
%================================================================

%================================================================
% MATLAB IMAGES: VERTEBRAE, TORUS AND FUNDAMENTAL DOMAIN
%================================================================
\begin{figure}[htbp!]
\centering
    \begin{minipage}[t]{.425\textwidth}
        \vspace{0pt}
        \subfigure[]{\includegraphics[width=\textwidth]{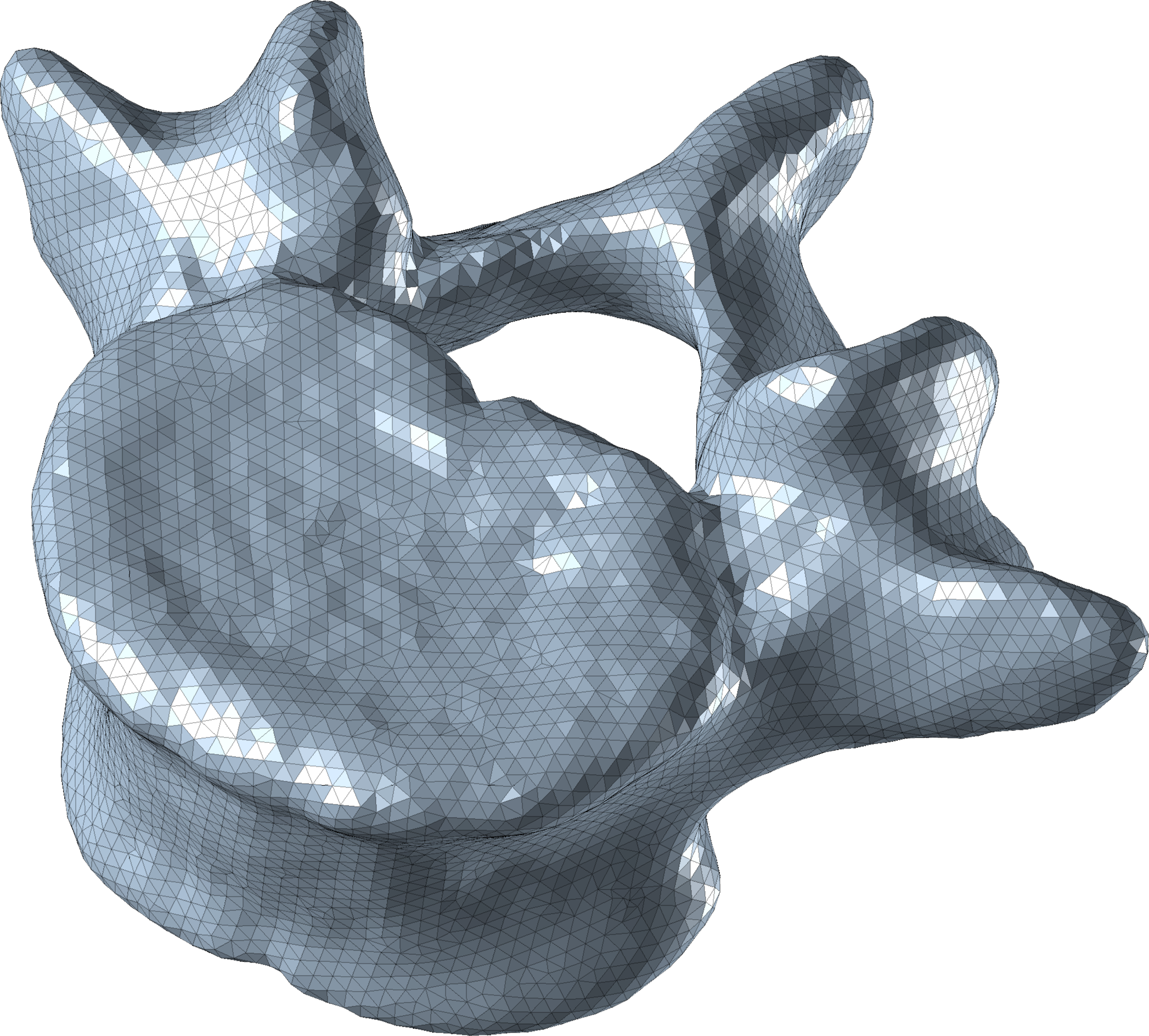}}\label{fig:vertebra_a}\par
        \hfill
    \end{minipage}\hfill
    \begin{minipage}[t]{.525\textwidth}
        \vspace{0pt}
        \subfigure[]{\includegraphics[width=\textwidth]{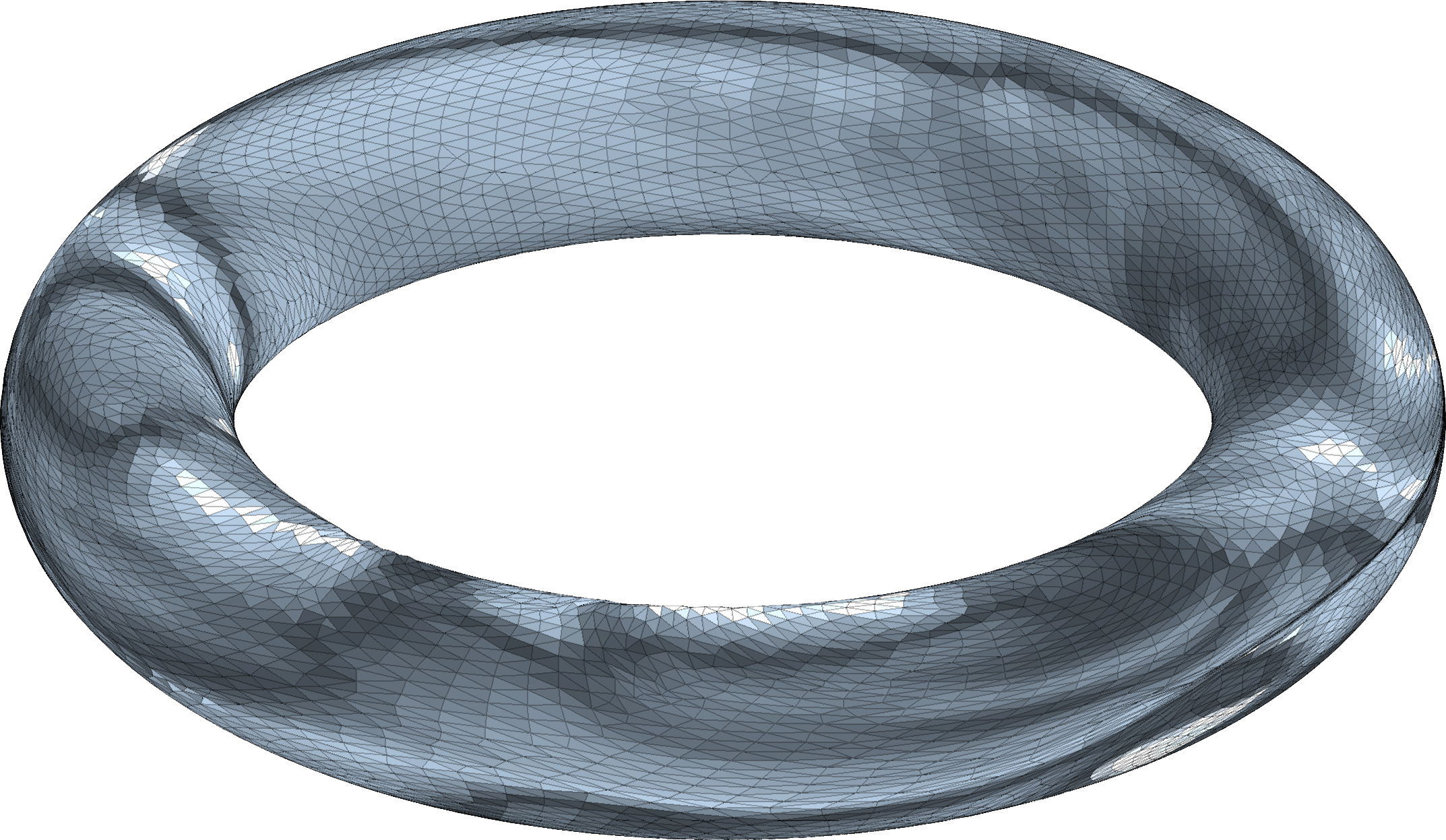}}\label{fig:vertebra_b}
        \vspace{0pt}
    \end{minipage}\\
\subfigure[]{\includegraphics[width=\textwidth]{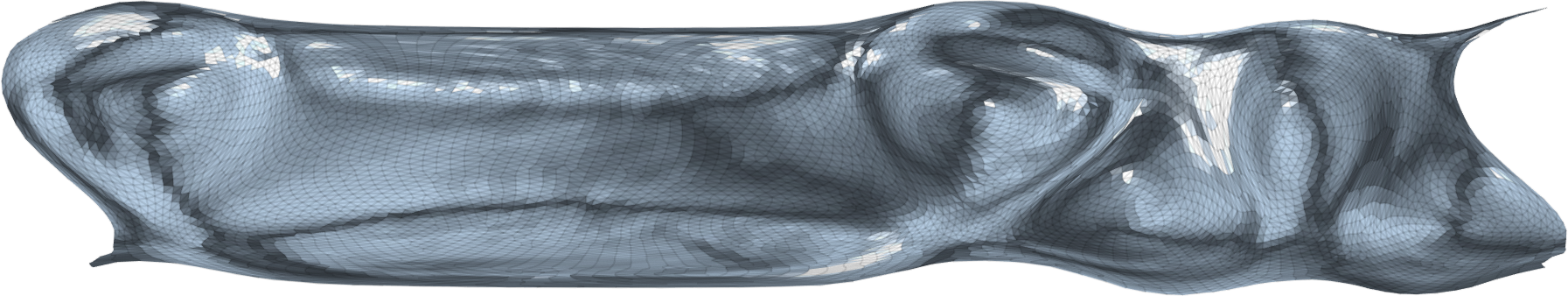}}\label{fig:vertebra_c}
\caption{The simplicial surface Vertebrae (a), its area-preserving parameterization on $\Torus$ (b), and the associated fundamental domain (c).}
\label{fig:vertebra_and_torus}
\end{figure}
%================================================================

We performed numerical experiments for all the four algorithms presented in Sect.~\ref{sec:riemannian_algorithms}, namely: projected gradient descent, projected conjugate gradient, Riemannian gradient method, and Riemannian conjugate gradient method. In all these methods, we adopted the line-search strategy that uses the quadratic/cubic interpolant of \protect{\cite[\S 6.3.2]{DennisSchnabel:1996}}, described in Appendix~\ref{sec:line_search}. 
In all the experiments, we monitor the energy $E$ defined in~\eqref{eq:objective_function}, the number of folding triangles (denoted by \#Fs), and the ratio between the standard deviation (SD) and the mean of the local area ratios. This quantity has been considered in~\protect{\cite{CR:2018,Sutti_Yueh:2024}}. If necessary, after each method, a bijectivity correction is applied to get rid of the (potential) overlapping triangles. We refer the reader to~\protect{\cite[\S 5.2]{Sutti_Yueh:2024}} for a description of the bijectivity correction procedure.

Table~\ref{tab:table_1} is for the projected gradient and the projected conjugate gradient methods, while Table~\ref{tab:table_2} is for their Riemannian counterparts. We point out that, after applying the bijectivity correction, the number of folding triangles is zero for all the models; hence, the column \#Fs appears only once in the tables.

%=============================================================
% TABLE 1: 
%=============================================================
\begin{table}[htbp]
   \caption{Comparison between the numerical results for the projected gradient and the projected conjugate gradient methods.} 
   \label{tab:table_1}
   \center
   \centering
   \resizebox{\textwidth}{!}{
      \begin{tabular}{@{} *{11}{c}}
            \toprule
            & \multicolumn{5}{c}{Projected Gradient Method} & \multicolumn{5}{c}{Projected Conjugate Gradient Method} \\ 
            \cmidrule(lr){2-6} \cmidrule(lr){7-11}
            & \multicolumn{3}{c}{Before bij. correction} & \multicolumn{2}{c}{After bij. correction} & \multicolumn{3}{c}{Before bij. correction} & \multicolumn{2}{c}{After bij. correction} \\
            \cmidrule(lr){2-6} \cmidrule(lr){7-11}
Model Name & SD/Mean & $E(f)$ & \#Fs & SD/Mean & $E(f)$ & SD/Mean & $E(f)$ & \#Fs & SD/Mean & $E(f)$ \\
            \cmidrule(r){1-1} \cmidrule(lr){2-6} \cmidrule(lr){7-11}
Knot & 0.0667 & $ 4.49 \times 10^{-2} $ & 0 & --- & --- & \textbf{0.0406} & $ 1.60 \times 10^{-2}$ & 0 & --- & --- \\
Triangle Torus 7/3 & 0.0849 & $ 3.06\times 10^{-2} $ & 70 & 0.1620 & $ 7.59\times 10^{-2} $ & 0.0446
 & $ 8.69 \times 10^{-3}$ & 14 & 0.0711 & $ 1.66\times 10^{-2} $ \\
Triangle Torus 10/3 & 0.0969 & $ 4.34\times 10^{-2} $ & 195 & 0.2628 & $ 2.21\times 10^{-1} $ & 0.0421
 & $ 7.82 \times 10^{-3}$ & 204 & \textbf{0.2052} & $ 1.23\times 10^{-1} $ \\
Bob Isotropic & 0.2633 & $ 3.77 \times 10^{-1} $ & 186 & 0.3640 & $ 6.69 \times 10^{-1} $ & 0.1600 & $ 1.46 \times 10^{-1}$ & 342 & \textbf{0.3318} & $ 5.55 \times 10^{-1}$ \\
Square Knot (1, 8, 0) & 0.0306 & $ 2.16 \times 10^{-2} $ & 0 & --- & --- & \textbf{0.0246}
 & $ 1.38 \times 10^{-2}$ & 0 & --- & --- \\
Circle Knot (3, 5/3) & 0.0154 & $ 5.63 \times 10^{-3} $ & 0 & --- & --- & \textbf{0.0106} & $ 2.65 \times 10^{-3}$ & 0 & --- & --- \\
Vertebrae & 0.3141 & $ 5.35 \times 10^{-1} $ & 287 & 0.3457 & $ 6.31 \times 10^{-1} $ & 0.2399 & $ 3.20 \times 10^{-1}$ & 525 & \textbf{0.3109} & $ 5.13 \times 10^{-1}$ \\
Kitten & 0.5133 & $ 1.38 \times 10^{0} $ & 203 & 0.5319 & $ 1.46 \times 10^{0} $ & 0.4123 & $ 8.92 \times 10^{-1}$ & 433 & 0.4412 & $ 9.84 \times 10^{-1}$ \\
Cogwheel & 0.2434 & $ 3.68 \times 10^{-1} $ & 88 & 0.2514 & $ 3.87 \times 10^{-1} $ & 0.0782 & $ 3.86 \times 10^{-2}$ & 161 & \textbf{0.1109} & $ 7.35 \times 10^{-2}$ \\
Chess Horse & 0.8970 & $ 6.02 \times 10^{0} $ & 241 & 0.9024 & $ 6.08 \times 10^{0} $ & 0.8099 & $ 5.26 \times 10^{0}$ & 441 & \textbf{0.8070} & $ 5.29 \times 10^{0}$ \\
Rusted Gear % Rusted_Gear_v25075
& 0.5923 & $ 1.43 \times 10^{0} $ & 641 & 0.6398 & $ 1.61 \times 10^{0} $ & 0.4192 & $ 7.48 \times 10^{-1}$ & 1651 & \textbf{0.5072} & $ 1.02 \times 10^{0}$ \\
Meander Ring % Anello_greca_v31604
& 0.4070 & $ 6.30 \times 10^{-1} $ & 0 & --- & --- & 0.0233 & $ 2.00 \times 10^{-3}$ & 5 & \textbf{0.0256} & $ 2.34 \times 10^{-3}$ \\
Lumbar 2    & 0.7433 & $ 1.45 \times 10^{0} $  &  267 & 0.7572 & $ 1.45 \times 10^{0} $  & 0.5667 & $ 1.36 \times 10^{0}$  & 3305 & 0.5760 & $ 1.44 \times 10^{0}$ \\
Lumbar 4    & 0.5000 & $ 1.14 \times 10^{0} $  & 2045 & 0.5117 & $ 1.14 \times 10^{0} $  & 0.4489 & $ 1.05 \times 10^{0}$ & 4393 & 0.4784 & $ 1.17 \times 10^{0} $ \\
Thoracic 10 & 0.5248 & $ 1.19 \times 10^{0} $  & 4168 & 0.5444 & $ 1.29 \times 10^{0} $  & 0.4353 & $ 1.04 \times 10^{0}$ & 8501 & 0.5008 & $ 1.30 \times 10^{0} $ \\
Thoracic 12 & 0.4496 & $ 9.24 \times 10^{-1} $ & 2347 & 0.4653 & $ 9.69 \times 10^{-1} $ & 0.4137 & $ 8.20 \times 10^{-1}$ & 6900 & 0.4486 & $ 9.79 \times 10^{-1}$ \\
            \bottomrule            
      \end{tabular}
      }
\end{table}
%=============================================================

%=============================================================
% TABLE 2: 
%=============================================================
\begin{table}[htbp]
   \caption{Comparison between the numerical results for the Riemannian gradient and the Riemannian conjugate gradient methods.} 
   \label{tab:table_2}
   \center
   \centering
   \resizebox{\textwidth}{!}{
      \begin{tabular}{@{} *{11}{c}}
            \toprule
            & \multicolumn{5}{c}{Riemannian Gradient Method} & \multicolumn{5}{c}{Riemannian Conjugate Gradient Method} \\
            \cmidrule(lr){2-6} \cmidrule(lr){7-11}
            & \multicolumn{3}{c}{Before bij. correction} & \multicolumn{2}{c}{After bij. correction} & \multicolumn{3}{c}{Before bij. correction} & \multicolumn{2}{c}{After bij. correction} \\
            \cmidrule(lr){2-6} \cmidrule(lr){7-11}
Model Name & SD/Mean & $E(f)$ & \#Fs & SD/Mean & $E(f)$ & SD/Mean & $E(f)$ & \#Fs & SD/Mean & $E(f)$ \\
            \cmidrule(r){1-1} \cmidrule(lr){2-6} \cmidrule(lr){7-11}
Knot        & 0.1393 & $ 1.83 \times 10^{-1} $ & 0 & --- & --- & 0.0925 & $ 8.25 \times 10^{-2}$ & 0 & --- & --- \\
Triangle Torus 7/3 & 0.3289 & $ 4.44 \times 10^{-1} $ & 0 & --- & --- & \textbf{0.0659}
 & $ 1.82 \times 10^{-2}$ & 0 & --- & --- \\
Triangle Torus 10/3 & 0.4936 & $ 8.66 \times 10^{-1} $ & 15 & 0.5040 & $ 8.87\times 10^{-1} $ & 0.1574
 & $ 9.80 \times 10^{-2}$ & 165 & 0.2801 & $ 2.58 \times 10^{-1} $ \\
Bob Isotropic & 0.4638 & $ 1.10 \times 10^{0} $ & 104 & 0.5104 & $ 1.29 \times 10^{0} $ & 0.1982 & $ 2.19 \times 10^{-1}$ & 291 & 0.3336 & $ 5.58 \times 10^{-1}$ \\
Square Knot (1, 8, 0) & 0.0588 & $ 7.69 \times 10^{-2} $ & 0 & --- & --- & 0.0418 & $ 4.44 \times 10^{-2}$ & 0 & --- & --- \\
Circle Knot (3, 5/3) & 0.0346 & $ 2.91 \times 10^{-2} $ & 0 & --- & --- & 0.0324 & $ 2.53 \times 10^{-2}$ & 0 & --- & --- \\
Vertebrae   & 0.3918 & $ 8.13 \times 10^{-1} $ & 37 & 0.3939 & $ 8.19 \times 10^{-1} $ & 0.2506 & $ 3.47 \times 10^{-1}$ & 469 & 0.3130 & $ 5.19 \times 10^{-1} $ \\
Kitten      & 0.5320 & $ 1.49 \times 10^{0} $  & 120 & 0.5440 & $ 1.56 \times 10^{0} $ & 0.4064 & $ 8.61 \times 10^{-1}$ & 462 & \textbf{0.4402} & $ 9.84 \times 10^{-1} $ \\
Cogwheel % Cogwheel_v13614
& 0.3957 & $ 9.73 \times 10^{-1} $ & 0 & --- & --- & 0.2765 & $ 4.73 \times 10^{-1}$ & 0 & --- & --- \\
Chess Horse & 0.9182 & $ 6.23 \times 10^{0} $  &  18 & 0.9188 & $ 6.24 \times 10^{0} $ & 0.8039 & $ 5.30 \times 10^{0}$  & 276 & 0.8141 & $ 5.35 \times 10^{0}$ \\
Rusted Gear % Rusted_Gear_v25075
& 0.6910 & $ 1.88 \times 10^{0} $ & 0 & --- & --- & 0.5539 & $ 1.29 \times 10^{0}$ & 1180 & 0.6137 & $ 1.48 \times 10^{0}$ \\
Meander Ring % Anello_greca_v31604
& 0.4421 & $ 7.45 \times 10^{-1} $ & 0 & --- & --- & 0.4370 & $ 7.27 \times 10^{-1}$ & 0 & --- & --- \\
Lumbar 2    & 0.7534 & $ 1.45 \times 10^{0} $  & 113 & 0.7907 & $ 1.46 \times 10^{0}$  & 0.6466 & $ 1.39 \times 10^{0} $ & 4006 & \textbf{0.5557} & $ 1.44 \times 10^{0}$ \\
Lumbar 4    & 0.5125 & $ 1.17 \times 10^{0} $  & 101 & 0.5190 & $ 1.17 \times 10^{0}$  & 0.4518 & $ 1.08 \times 10^{0} $ & 5173 & \textbf{0.4776} & $ 1.17 \times 10^{0} $ \\
Thoracic 10 & 0.5608 & $ 1.29 \times 10^{0} $  & 260 & 0.5683 & $ 1.30 \times 10^{0}$  & 0.4415 & $ 1.09 \times 10^{0} $ & 10236 & \textbf{0.4968} & $ 1.31 \times 10^{0} $ \\
Thoracic 12 & 0.4652 & $ 9.67 \times 10^{-1}$ & 104 & 0.4713 & $ 9.69 \times 10^{-1}$  & 0.4120 & $ 8.60 \times 10^{-1}$ & 6498 & \textbf{0.4460} & $ 9.80 \times 10^{-1}$ \\
            \bottomrule       
      \end{tabular}
      }
\end{table}
%=============================================================

From Tables~\ref{tab:table_1} and~\ref{tab:table_2}, we observe that the projected conjugate gradient and the Riemannian conjugate gradient methods always perform better than the other two methods.
Regardless of the method adopted, some mesh models (Knot, Square Knot (1, 8, 0), and Circle Knot (3, 5/3)) never require a correction to ensure the mapping's bijectivity.
When the bijectivity correction is required, the projected conjugate gradient method gives the best SD/Mean results for seven mesh models (Trianle Torus 10/3, Bob Isotropic, Vertebrae, Cogwheel, Chess Horse, Rusted Gear, and Meander Ring), while the Riemannian conjugate gradient method gives better results in terms of the SD/Mean for five mesh models (Kitten, Lumbar 2, Lumbar 4, Thoracic 10, and Thoracic 12). This is highlighted by the bold text in the SD/Mean columns. The increase in the SD/Mean and energy values after applying the bijectivity correction is due to the unfolding of overlapped triangles.

In general, all four algorithms perform better than the original SEM method; this is partly because the initial map is computed by the original SEM fixed-point method, and this provides a way to improve the initial mapping. We experimented with several different initial mappings, and our numerical results proved to be (highly) dependent on the initial mapping.

Figure~\ref{fig:All_models_and_methods_alphaMax_1_MaxIter_100} reports on the convergence behavior of the objective function $E$. For each benchmark mesh model, we plot all four algorithms' convergence behaviors in the same window.
We observe that the objective function is monotonically decreasing during the whole optimization process.
The projected conjugate gradient method outperforms the other methods in the majority of cases.

%================================================================
% MATLAB PLOTS: CONVERGENCE BEHAVIOR OF E
%================================================================
\begin{figure}[htbp]
  \centering
  \includegraphics[width=\textwidth]{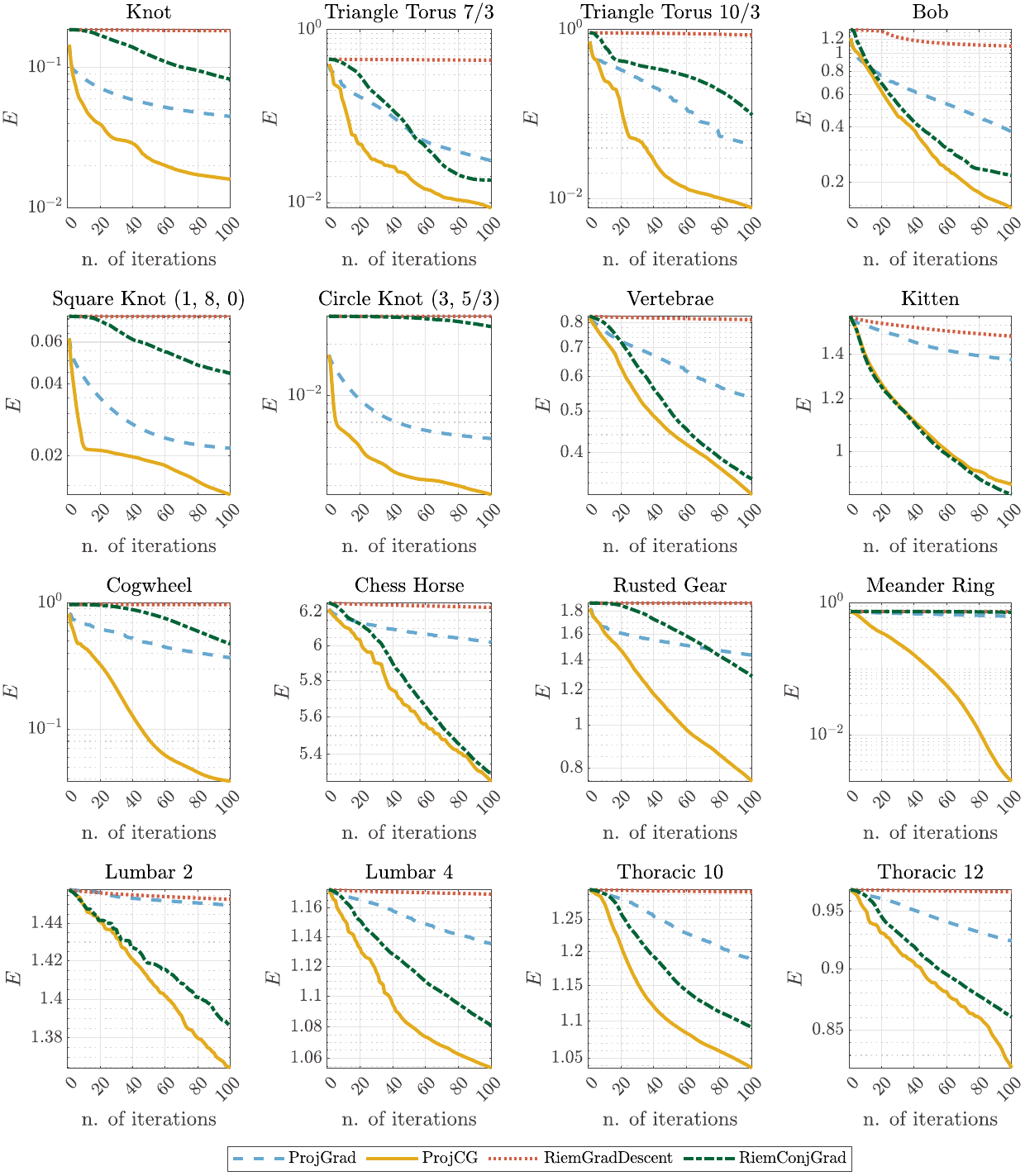}
  \caption{Convergence behavior for the objective function $E$ of the four algorithms for all the benchmark mesh models.}
   \label{fig:All_models_and_methods_alphaMax_1_MaxIter_100}
\end{figure}
%================================================================

\section{Applications} \label{sec:applications}
This section presents applications of area-preserving parameterizations for genus-one surfaces for vertebra registration and texture mapping.

\subsection{Surface registration between two vertebrae}
Given two vertebra surfaces $\mathcal{M}$ and $\mathcal{N}$ with corresponding landmark pairs
$$
\{ (p_\ell, q_\ell) \mid p_\ell \in \mathcal{M},\ q_\ell \in \mathcal{N} \}_{\ell=1}^k,
$$
the goal of surface registration is to find a bijective mapping $\Phi : \mathcal{M} \to \mathcal{N}$ such that $\Phi(p_\ell) = q_\ell$ for every $\ell$. Using toroidal parameterizations, this registration problem can be formulated and solved on a canonical ring-torus domain.

Suppose the toroidal parameterizations $f:\mathcal{M}\to \mathbb{T}^2(R_1,r_1)$ and $g:\mathcal{N}\to \mathbb{T}^2(R_2,r_2)$ of the surfaces $\mathcal{M}$ and $\mathcal{N}$ are computed using Algorithm~\ref{algo:RGD}--\ref{algo:PCG}. To unify the shapes of two tori $\mathbb{T}^2(R_1,r_1)$ and $\mathbb{T}^2(R_2,r_2)$, we first compute the torus coordinates $(\theta,\phi)$ of each point $(x,y,z)\in \mathbb{T}^2$ as
$$
\theta =
\begin{cases}
\arctan\frac{y}{x}, & x>0, \; y \geq 0, \\
2\pi + \arctan\frac{y}{x}, & x>0, \; y<0, \\
\pi + \arctan\frac{y}{x}, & x<0, \\
\frac{\pi}{2}, & x=0, \; y>0,\\
\frac{3\pi}{2}, & x=0, \; y<0,
\end{cases}
\quad\text{and}\quad
\phi =
\begin{cases}
\frac{\pi}{2} + \arcsin\frac{z}{r}, & x^2 + y^2 \geq R^2,\\
\frac{3\pi}{2} - \arcsin\frac{z}{r}, & x^2 + y^2 < R^2.
\end{cases}
$$
We choose $R = \frac{1}{2}(R_1 + R_2)$ and $r = \frac{1}{2}(r_1 + r_2)$. Then, the unified toroidal domain is obtained by the parameterization
\begin{align*}
x(\theta,\phi) &= (R + r\cos\phi) \cos\theta, \\
y(\theta,\phi) &= (R + r\cos\phi) \sin\theta, \\
z(\theta,\phi) &= r \sin\phi.
\end{align*}

The objective function for the surface registration is defined as
$$
E_R(\f) = E(\f) + \lambda \| \f_{\mathtt{P}} - \g_{\mathtt{Q}} \|_{\mathrm{F}}^{2},
$$
where $E(\f)$ is defined in \eqref{eq:objective_function}, and the second term enforces the alignment of the landmark correspondences, with $\f_{\mathtt{P}}$ and $\g_{\mathtt{Q}}$ denoting the values of $\f$ and $\g$ at the prescribed landmark indices $\mathtt{P}$ and $\mathtt{Q}$, respectively. The parameter $\lambda > 0$ is a penalty coefficient that controls the strength of the landmark constraint and is typically set to $0.2$ in our experiments.

The gradient of $E_R$ is given by
$$
\nabla E_R(\f) = \nabla E(\f) + 2 \lambda P^\top(\f_{\mathtt{P}} - \g_{\mathtt{Q}}),
$$
where $\nabla E$ is given in \eqref{eq:grad_E}, and $P\in \{0,1\}^{k\times n}$ is a row-selection matrix given by
$$
P_{\ell, i} =
\begin{cases}
1 & \text{if $i = \mathtt{P}(\ell)$}, \\
0 & \text{otherwise},
\end{cases}
$$
for $\ell = 1,\dots,k$, $i = 1,\dots,n$. 
Here, $\mathtt{P}(\ell)$ denotes the $\ell$th entry of the landmark index set $\mathtt{P}$. 
The minimization of $E_R$ can be analogously carried out using Algorithm~\ref{algo:RGD}--\ref{algo:PCG}.

A landmark-aligned morphing process from $\mathcal{M}$ to $\mathcal{N}$ can be carried out by the linear homotopy $H \colon \mathcal{M} \times [0,1] \to \mathbb{R}^3$ given by
\begin{equation} \label{eq:homotopy}
H(\bv,t) = (1 - t) \, \bv + t \, \Phi(\bv).
\end{equation}
Figure~\ref{fig:morphing} illustrates this morphing process between two surfaces of lumbar vertebrae through four snapshots corresponding to $t=0, \frac{1}{3}, \frac{2}{3}, 1$.

%================================================================
% FIGURE: REGISTRATION
%================================================================
\begin{figure}
    \centering
    \begin{tabular}{cccc}
    \includegraphics[height=3.2cm]{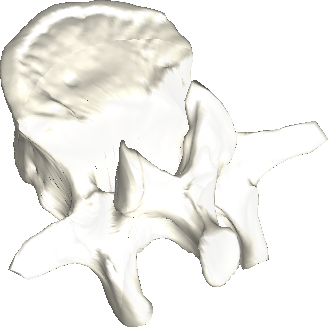} &
    \includegraphics[height=3.2cm]{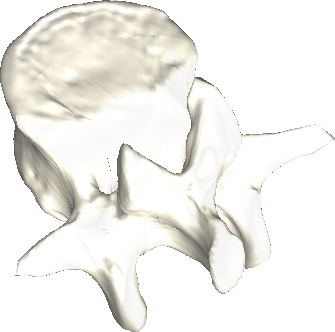} &
    \includegraphics[height=3.2cm]{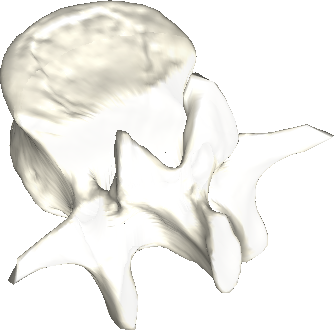} &
    \includegraphics[height=3.2cm]{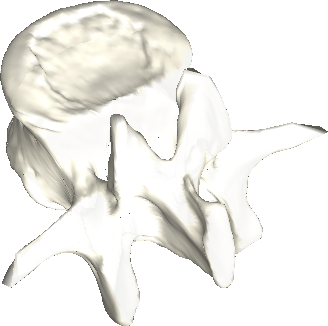} \\
    $t=0$ & $t=\frac{1}{3}$ & $t=\frac{2}{3}$ & $t=1$
    \end{tabular}
    \caption{The morphing process between two lumbar vertebrae.}
    \label{fig:morphing}
\end{figure}
%================================================================

\subsection{Texture mapping}
Texture mapping is a computer graphics technique for applying 2D images to 3D models.
In this section, we illustrate texture mapping through a couple of examples.

The procedure for texture mapping is as follows:
\begin{itemize}
    \item We choose the area-preserving parameterization of a mesh model computed via the projected conjugate gradient method. 
    \item We compute the fundamental domain using Algorithm~\ref{algo:fundamental_domain}.
    \item We use MeshLab~\protect{\cite{MeshLab:2008}} to visualize the models after texture mapping.
    \item Perform scaling and translation.
\end{itemize}

Figures~\ref{fig:texture_mapping_gear} and \ref{fig:texture_mapping_horse} illustrate the texture mapping for two mesh models, ``Rusted Gear'' and ``Chess Horse'', respectively.

%================================================================
% FIGURE: TEXTURE MAPPING
%================================================================
\begin{figure}
    \centering
    \subfigure[]{\includegraphics[height=4cm]{Rusted_Gear_v25075.png}}\label{fig:rusted_gear_mesh} \hspace{1.5cm}
    \subfigure[]{\includegraphics[height=4cm]{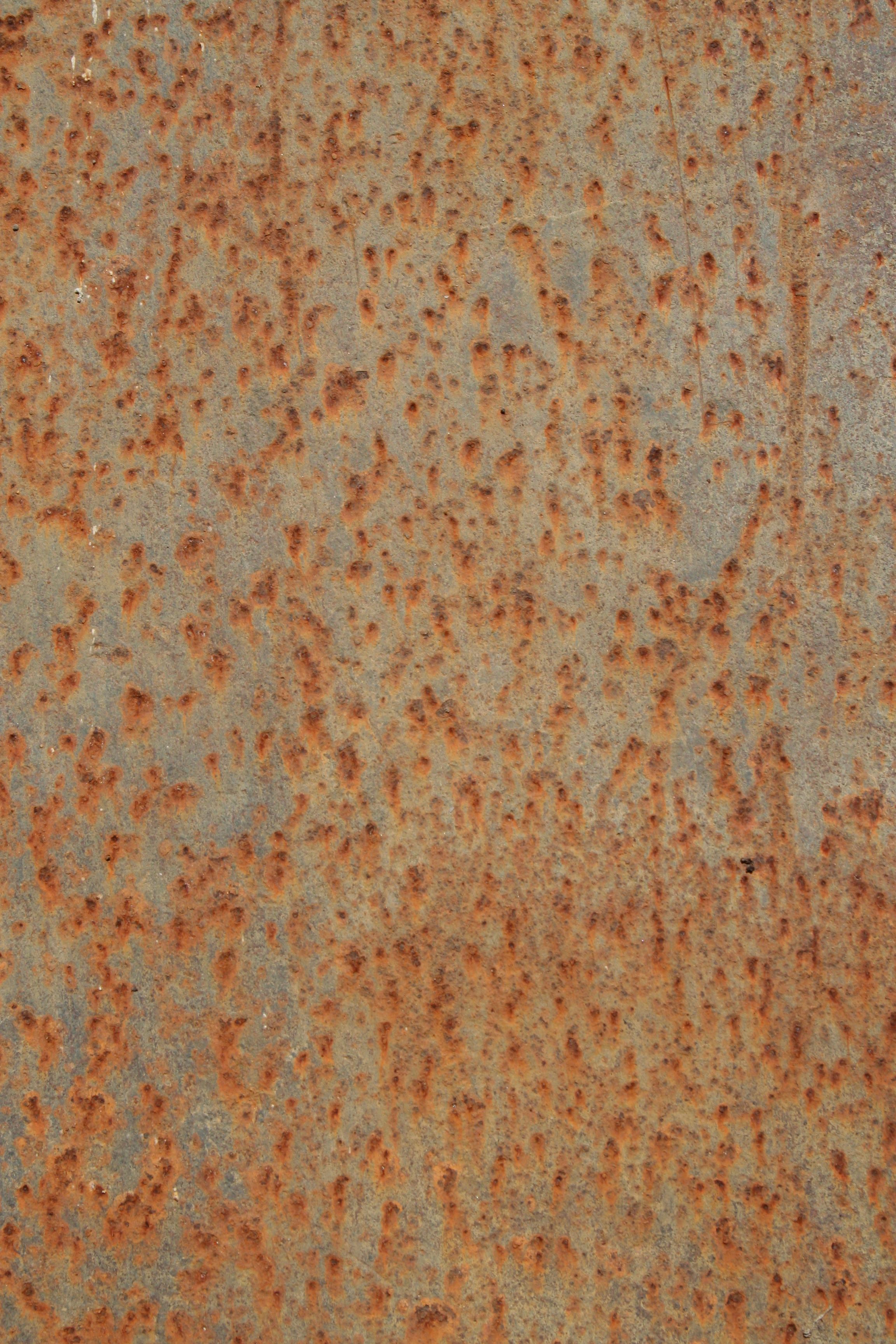}}\label{fig:rusted_gear_texture} \hspace{1.5cm}
    \subfigure[]{\includegraphics[height=4cm]{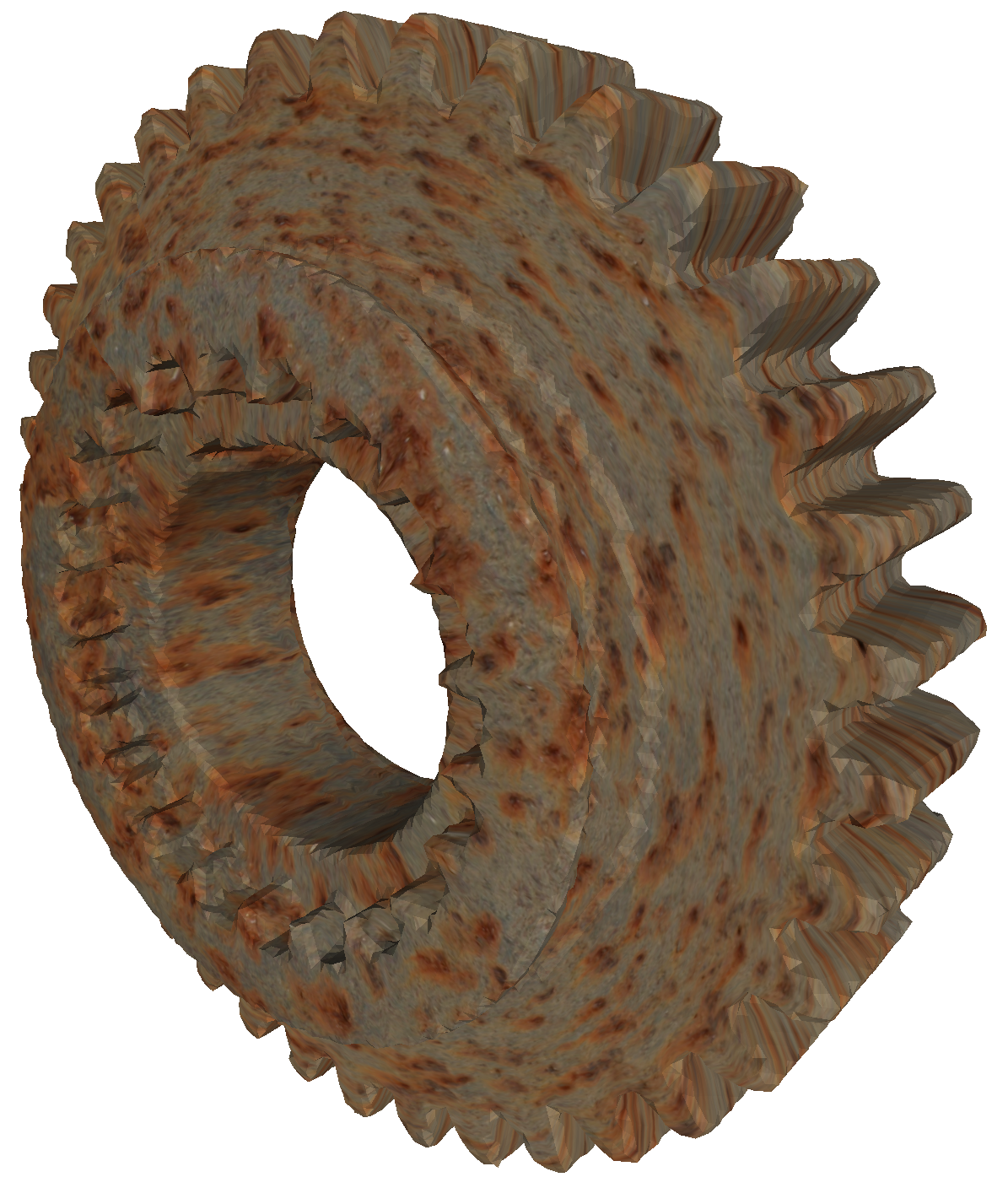}}\label{fig:rusted_gear_w_texture}
    \caption{The ``Rusted Gear'' mesh model, the texture map (b), and the model after texture mapping (c). Both the mesh model and the mapping are from \url{https://www.cgtrader.com/free-3d-models/vehicle/industrial-vehicle/rusted-mechanical-gear}.}
    \label{fig:texture_mapping_gear}
\end{figure}
%================================================================

%================================================================
% FIGURE: TEXTURE MAPPING
%================================================================
\begin{figure}
    \centering
    \subfigure[]{\includegraphics[height=4cm]{ChessHorse.png}} \hspace{1.5cm}
    \subfigure[]{\includegraphics[height=4cm]{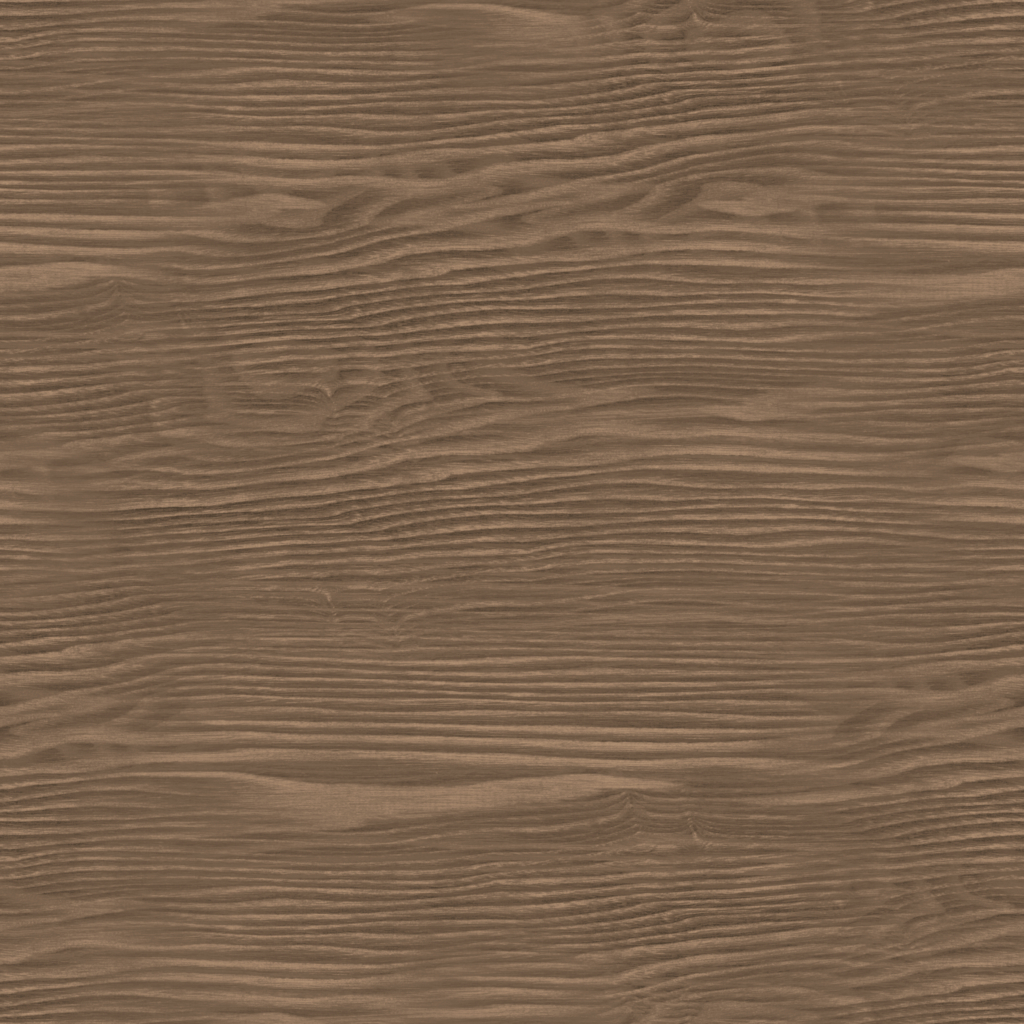}} \hspace{1.5cm}
    \subfigure[]{\includegraphics[height=4cm]{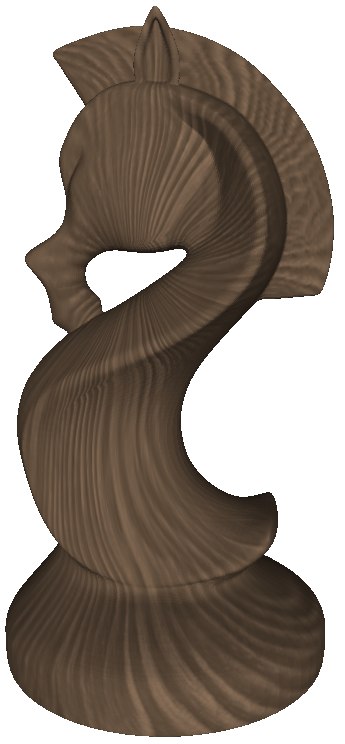}}
    \caption{The ``Chess Horse'' mesh model (a), the texture map (b), and the model after texture mapping (c). The wood texture was taken from \url{https://ambientcg.com/view?id=Wood049}.}
    \label{fig:texture_mapping_horse}
\end{figure}
%================================================================

\section{Concluding remarks and future outlook} \label{sec:conclusions}
We considered the problem of computing toroidal area-preserving mappings of genus-one closed surfaces. We developed the geometry and algorithmic components to propose four algorithms: the projected gradient, the projected conjugate gradient, the Riemannian gradient, and the Riemannian conjugate gradient methods.
Numerical experiments using the four algorithms on several benchmark mesh models demonstrate that the projected conjugate gradient method outperforms the other algorithms in most cases.

\section{Acknowledgments}
The work of the first author was supported by the National Center for Theoretical Sciences, under the NSTC grant 112-2124-M-002-009-. 
The work of the second author was supported by the National Science and Technology Council under Grant 113-2115-M-003-012-MY2, the National Center for Theoretical Sciences, and National Taiwan Normal University through the Higher Education Sprout Project funded by the Ministry of Education, Taiwan.

%================================================================
% APPENDIX
%================================================================
\appendix

\section{Calculations of the derivative of the retraction} \label{sec:line_search}

This section is similar in spirit to Appendix C in~\protect{\cite{Sutti_Yueh:2024}}. Here, we specialize it with the derivative of the retraction on the torus, $ \Retraction_{\x} $, defined in~\eqref{eq:retraction_on_torus}.

In the line-search procedure, at a given iteration $k$, we need to check that the new step length $ \alpha_{k} $ satisfies the sufficient decrease condition
\[
   \phi(\alpha_{k}) \leq \phi(0) + c_{1} \alpha_{k} \phi'(0),
\]
where $ \phi(\alpha) $ is the real-valued function of the one real variable $\alpha$ defined by $\phi(\alpha) \coloneqq E(\boldsymbol{\psi}(\alpha))$, where $\boldsymbol{\psi}(\alpha) \coloneqq \Retraction_{\x}(\alpha \boldd) = \Pi(\x + \alpha \boldd) $, with the retraction $ \Retraction_{\x} $ as in~\eqref{eq:retraction_on_torus}.
Evaluating the sufficient decrease condition involves the derivative $ \phi'(0) $, which is given by
\[
    \phi'(0) = \left.\left[ \phi'(\alpha) \right]\right\rvert_{\alpha=0} = \left.\left[\trace\!\big(\nabla E(\boldsymbol{\psi}(\alpha))\tr \boldsymbol{\psi}'(\alpha)\big) \right]\right\rvert_{\alpha=0} = \trace\!\big(\nabla E(\x)\tr \boldsymbol{\psi}'(0)\big),
\]
so we need to compute $ \boldsymbol{\psi}'(0) $.
We first compute $ \boldsymbol{\psi}'(\alpha) $ as follows
\begin{align*}
   \boldsymbol{\psi}'(\alpha) &= \frac{\partial}{\partial \alpha} \, \Pi_{\Torus}(\x + \alpha \boldd) \\
   &= \begin{pmatrix}
       \frac{\partial}{\partial \alpha} \left( ( R + r\cos\phi_{\x + \alpha \boldd} ) \cos\theta_{\x + \alpha \boldd} \right) \\[4pt]
       \frac{\partial}{\partial \alpha} \left( ( R + r\cos\phi_{\x + \alpha \boldd} ) \sin\theta_{\x + \alpha \boldd} \right) \\[4pt]
       \frac{\partial}{\partial \alpha} \left( r\sin\phi_{\x + \alpha \boldd} \right)
   \end{pmatrix} \\
   &= \begin{pmatrix}
       \left( R + r \, \cos\phi_{\x + \alpha \boldd} \right) \frac{\partial}{\partial \alpha} \cos\theta_{\x + \alpha \boldd} + r \cos\theta_{\x + \alpha \boldd} \, \frac{\partial}{\partial \alpha} \cos\phi_{\x + \alpha \boldd} \\[4pt]
       \left( R + r \, \cos\phi_{\x + \alpha \boldd} \right) \frac{\partial}{\partial \alpha} \sin\theta_{\x + \alpha \boldd} + r \sin\theta_{\x + \alpha \boldd} \, \frac{\partial}{\partial \alpha} \cos\phi_{\x + \alpha \boldd} \\[4pt]
       r \, \frac{\partial}{\partial \alpha} \sin\phi_{\x + \alpha \boldd}
   \end{pmatrix}.
\end{align*}
Now, we need to compute the derivatives $ \frac{\partial}{\partial \alpha} \cos\theta_{\x + \alpha \boldd} $, $ \frac{\partial}{\partial \alpha} \sin\theta_{\x + \alpha \boldd} $, $ \frac{\partial}{\partial \alpha} \cos\phi_{\x + \alpha \boldd} $, and $ \frac{\partial}{\partial \alpha} \sin\phi_{\x + \alpha \boldd} $, where
\[
   \cos\theta_{\x + \alpha \boldd} = \frac{x_{1} + \alpha d_{1}}{\sqrt{(x_{1} + \alpha d_{1})^{2}+(x_{2} + \alpha d_{2})^{2}}}, \qquad \sin\theta_{\x + \alpha \boldd} = \frac{x_{2} + \alpha d_{2}}{\sqrt{(x_{1} + \alpha d_{1})^{2}+(x_{2} + \alpha d_{2})^{2}}}.
\]
For ease of notation, we introduce the following dummy variables (all depending on $\alpha$)
\[
   A \coloneqq d_{1} (x_{1} + \alpha d_{1}) + d_{2} (x_{2} + \alpha d_{2}), \qquad B \coloneqq \sqrt{(x_{1} + \alpha d_{1})^{2}+(x_{2} + \alpha d_{2})^{2}},
\]
\[
   C \coloneqq B - R, \qquad D \coloneqq x_{3} + \alpha d_{3}, \qquad G \coloneqq \sqrt{C^{2} + D^{2}}, \qquad  H \coloneqq \frac{\frac{A C}{B} + d_{3} D}{G^{3}}.
\]
The derivatives are
\[
   \frac{\partial}{\partial \alpha} \cos\theta_{\x + \alpha \boldd} = \frac{d_{1}}{B} - (x_{1} + \alpha d_{1}) \, \frac{A}{B^{3}},
\qquad
   \frac{\partial}{\partial \alpha} \sin\theta_{\x + \alpha \boldd} = \frac{d_{2}}{B} - (x_{2} + \alpha d_{2}) \, \frac{A}{B^{3}}.
\]
Analogously, for the elevation angle $\phi$, we have
\[
   \cos\phi_{\x + \alpha \boldd} = \frac{\sqrt{(x_{1} + \alpha d_{1})^{2}+(x_{2} + \alpha d_{2})^{2}} - R}{\sqrt{\left(\sqrt{(x_{1} + \alpha d_{1})^{2}+(x_{2} + \alpha d_{2})^{2}} - R \right)^{2} + (x_{3} + \alpha d_{3})^{2} }}
\]
\[
   \sin\phi_{\x + \alpha \boldd} = \frac{x_{3} + \alpha d_{3}}{\sqrt{\left(\sqrt{(x_{1} + \alpha d_{1})^{2}+(x_{2} + \alpha d_{2})^{2}} - R \right)^{2} + (x_{3} + \alpha d_{3})^{2} }}
\]
The derivatives are:
\[
   \frac{\partial}{\partial \alpha} \cos\phi_{\x + \alpha \boldd} = \frac{A}{B G} - C H,
\qquad
   \frac{\partial}{\partial \alpha} \sin\phi_{\x + \alpha \boldd} = \frac{d_{3}}{G} - D H.
\]

%=========================================
% REFERENCES:
%=========================================
\bibliographystyle{aomalpha}

\begin{small}
    \bibliography{Torus_manuscript_arXiv.bib}

\providecommand{\etalchar}[1]{$^{#1}$}
\providecommand{\bysame}{\leavevmode\hbox to3em{\hrulefill}\thinspace}
\providecommand{\noopsort}[1]{}
\providecommand{\mr}[1]{\href{http://www.ams.org/mathscinet-getitem?mr=#1}{MR~#1}}
\providecommand{\zbl}[1]{\href{http://www.zentralblatt-math.org/zmath/en/search/?q=an:#1}{Zbl~#1}}
\providecommand{\jfm}[1]{\href{http://www.emis.de/cgi-bin/JFM-item?#1}{JFM~#1}}
\providecommand{\arxiv}[1]{\href{http://www.arxiv.org/abs/#1}{arXiv~#1}}
\providecommand{\doi}[1]{\url{https://doi.org/#1}}
\providecommand{\MR}{\relax\ifhmode\unskip\space\fi MR }
% \MRhref is called by the amsart/book/proc definition of \MR.
\providecommand{\MRhref}[2]{%
  \href{http://www.ams.org/mathscinet-getitem?mr=#1}{#2}
}
\providecommand{\href}[2]{#2}
\begin{thebibliography}{DLSCS08}

\bibitem[ABG07]{ABG:2007}
\bgroup\scshape{}P.-A. Absil\egroup{}, \bgroup\scshape{}C.~G. Baker\egroup{},
  and \bgroup\scshape{}K.~A. Gallivan\egroup{}, {Trust-region methods on
  Riemannian manifolds},  \emph{Found. of Comput. Math.} \textbf{7} (2007),
  303--330. \doi{10.1007/s10208-005-0179-9}.
\bibitem[AMS08]{AMS:2008}
\bgroup\scshape{}P.-A. Absil\egroup{}, \bgroup\scshape{}R.~Mahony\egroup{}, and
  \bgroup\scshape{}R.~Sepulchre\egroup{}, \emph{{Optimization Algorithms on
  Matrix Manifolds}}, Princeton University Press, Princeton, NJ, 2008.
  \doi{10.1515/9781400830244}.
\bibitem[AM12]{Absil:2012}
\bgroup\scshape{}P.-A. Absil\egroup{} and \bgroup\scshape{}J.~Malick\egroup{},
  Projection-like retractions on matrix manifolds,  \emph{SIAM J. Optim.}
  \textbf{22} no.~1 (2012), 135--158. \doi{10.1137/100802529}.
\bibitem[Bou23]{Boumal:2023}
\bgroup\scshape{}N.~Boumal\egroup{}, \emph{{An Introduction to Optimization on
  Smooth Manifolds}}, Cambridge University Press, 2023.
  \doi{10.1017/9781009166164}.
\bibitem[CR18]{CR:2018}
\bgroup\scshape{}G.~P.~T. Choi\egroup{} and \bgroup\scshape{}C.~H.
  Rycroft\egroup{}, {Density-Equalizing Maps for Simply Connected Open
  Surfaces},  \emph{SIAM J. Imaging Sci.} \textbf{11} no.~2 (2018), 1134--1178.
  \doi{10.1137/17M1124796}.
\bibitem[CCC{\etalchar{+}}08]{MeshLab:2008}
\bgroup\scshape{}P.~Cignoni\egroup{}, \bgroup\scshape{}M.~Callieri\egroup{},
  \bgroup\scshape{}M.~Corsini\egroup{},
  \bgroup\scshape{}M.~Dellepiane\egroup{},
  \bgroup\scshape{}F.~Ganovelli\egroup{}, and
  \bgroup\scshape{}G.~Ranzuglia\egroup{}, {MeshLab: an Open-Source Mesh
  Processing Tool},  in \emph{Eurographics Italian Chapter Conference}
  (\bgroup\scshape{}V.~Scarano\egroup{}, \bgroup\scshape{}R.~D.
  Chiara\egroup{}, and \bgroup\scshape{}U.~Erra\egroup{}, eds.), The
  Eurographics Association, 2008.
  \doi{/10.2312/LocalChapterEvents/ItalChap/ItalianChapConf2008/129-136}.
\bibitem[DS96]{DennisSchnabel:1996}
\bgroup\scshape{}J.~E. Dennis, Jr.\egroup{} and \bgroup\scshape{}R.~B.
  Schnabel\egroup{}, \emph{Numerical methods for unconstrained optimization and
  nonlinear equations}, \emph{Classics in Applied Mathematics} \textbf{16},
  Society for Industrial and Applied Mathematics (SIAM), Philadelphia, PA,
  1996, Corrected reprint of the 1983 original. \doi{10.1137/1.9781611971200}.
\bibitem[DFW13]{Dey:2013}
\bgroup\scshape{}T.~K. Dey\egroup{}, \bgroup\scshape{}F.~Fan\egroup{}, and
  \bgroup\scshape{}Y.~Wang\egroup{}, An efficient computation of handle and
  tunnel loops via {R}eeb graphs,  \emph{ACM Trans. Graph.} \textbf{32} no.~4
  (2013), 1--10. \doi{10.1145/2461912.2462017}.
\bibitem[DLSCS08]{Dey:2008}
\bgroup\scshape{}T.~K. Dey\egroup{}, \bgroup\scshape{}K.~Li\egroup{},
  \bgroup\scshape{}J.~Sun\egroup{}, and
  \bgroup\scshape{}D.~Cohen-Steiner\egroup{}, {Computing geometry-aware handle
  and tunnel loops in 3D models},  \emph{ACM Trans. Graph.} \textbf{27} no.~3
  (2008), 1–9. \doi{10.1145/1360612.1360644}.
\bibitem[EAS98]{EAS:1998}
\bgroup\scshape{}A.~Edelman\egroup{}, \bgroup\scshape{}T.~A. Arias\egroup{},
  and \bgroup\scshape{}S.~T. Smith\egroup{}, The geometry of algorithms with
  orthogonality constraints,  \emph{SIAM J. Matrix Anal. Appl.} \textbf{20}
  no.~2 (1998), 303--353. \doi{10.1137/S0895479895290954}.
\bibitem[Eng14]{Engw14}
\bgroup\scshape{}D.~Engwirda\egroup{}, \emph{Locally optimal
  {D}elaunay-refinement and optimisation-based mesh generation}, Ph.D. thesis,
  University of Sydney, 2014.
\bibitem[Eng15]{Engw15}
\bgroup\scshape{}D.~Engwirda\egroup{}, Voronoi-based point-placement for
  three-dimensional {D}elaunay-refinement,  \emph{Procedia Engineering}
  \textbf{124} (2015), 330--342.
\bibitem[Eng16]{Engw16}
\bgroup\scshape{}D.~Engwirda\egroup{}, Conforming restricted {D}elaunay mesh
  generation for piecewise smooth complexes,  \emph{Procedia engineering}
  \textbf{163} (2016), 84--96.
\bibitem[EI14]{EnIv14}
\bgroup\scshape{}D.~Engwirda\egroup{} and \bgroup\scshape{}D.~Ivers\egroup{},
  {Face-centred Voronoi refinement for surface mesh generation},
  \emph{Procedia Engineering} \textbf{82} (2014), 8--20.
\bibitem[EI16]{EnIv16}
\bgroup\scshape{}D.~Engwirda\egroup{} and \bgroup\scshape{}D.~Ivers\egroup{},
  Off-centre {S}teiner points for {D}elaunay-refinement on curved surfaces,
  \emph{Comput.-Aided Des.} \textbf{72} (2016), 157--171.
\bibitem[FR64]{Fletcher:1964}
\bgroup\scshape{}R.~Fletcher\egroup{} and \bgroup\scshape{}C.~M.
  Reeves\egroup{}, Function minimization by conjugate gradients,  \emph{The
  Computer Journal} \textbf{7} no.~2 (1964), 149--154.
  \doi{10.1093/comjnl/7.2.149}.
\bibitem[GY02]{GuYa02}
\bgroup\scshape{}X.~Gu\egroup{} and \bgroup\scshape{}S.-T. Yau\egroup{},
  Computing conformal structures of surfaces,  \emph{Commun. Inf. Syst.}
  \textbf{2} no.~2 (2002), 121--146. \doi{10.4310/CIS.2002.v2.n2.a2}.
\bibitem[HS52]{Hestenes:1952}
\bgroup\scshape{}M.~R. Hestenes\egroup{} and
  \bgroup\scshape{}E.~Stiefel\egroup{}, Methods of conjugate gradients for
  solving linear systems,  \emph{J. Res. Natl. Bur. Stand.} \textbf{49} no.~6
  (1952), 409--436.
\bibitem[LY24]{LiuYueh:2024}
\bgroup\scshape{}S.-Y. Liu\egroup{} and \bgroup\scshape{}M.-H. Yueh\egroup{},
  Convergent authalic energy minimization for disk area-preserving
  parameterizations,  \emph{J. Sci. Comput.} \textbf{100} no.~2 (2024), 43.
  \doi{10.1007/s10915-024-02594-2}.
\bibitem[NW06]{NW:2006}
\bgroup\scshape{}J.~Nocedal\egroup{} and \bgroup\scshape{}S.~J.
  Wright\egroup{}, \emph{{Numerical Optimization}}, second ed., Springer New
  York, NY, 2006. \doi{10.1007/978-0-387-40065-5}.
\bibitem[PR52]{Polak:1969}
\bgroup\scshape{}E.~Polak\egroup{} and \bgroup\scshape{}G.~Ribi\`ere\egroup{},
  Note sur la convergence de méthodes de directions conjuguées,  \emph{ESAIM
  Math. Model. Numer. Anal.} \textbf{3} no.~R1 (1952), 35--43.
\bibitem[RW12]{RingWirth:2012}
\bgroup\scshape{}W.~Ring\egroup{} and \bgroup\scshape{}B.~Wirth\egroup{},
  Optimization methods on {R}iemannian manifolds and their application to shape
  space,  \emph{SIAM J. Optim.} \textbf{22} no.~2 (2012), 596--627.
  \doi{10.1137/11082885X}.
\bibitem[Sat16]{Sato:2016}
\bgroup\scshape{}H.~Sato\egroup{}, {A Dai--Yuan-type Riemannian conjugate
  gradient method with the weak Wolfe conditions},  \emph{Comput. Optim. Appl.}
  \textbf{64} no.~1 (2016), 101--118. \doi{10.1007/s10589-015-9801-1}.
\bibitem[Sat21]{Sato:2021}
\bgroup\scshape{}H.~Sato\egroup{}, \emph{{Riemannian Optimization and Its
  Applications}}, Springer International Publishing, 2021.
  \doi{https://doi.org/10.1007/978-3-030-62391-3}.
\bibitem[Sat22]{Sato:2022}
\bgroup\scshape{}H.~Sato\egroup{}, {Riemannian Conjugate Gradient Methods:
  General Framework and Specific Algorithms with Convergence Analyses},
  \emph{SIAM J. Optim.} \textbf{32} no.~4 (2022), 2690--2717.
  \doi{10.1137/21M1464178}.
\bibitem[SI15]{SatoIwai:2015}
\bgroup\scshape{}H.~Sato\egroup{} and \bgroup\scshape{}T.~Iwai\egroup{}, A new,
  globally convergent riemannian conjugate gradient method,
  \emph{Optimization} \textbf{64} no.~4 (2015), 1011--1031.
  \doi{10.1080/02331934.2013.836650}.
\bibitem[SCQ{\etalchar{+}}16]{Su:2016}
\bgroup\scshape{}K.~Su\egroup{}, \bgroup\scshape{}L.~Cui\egroup{},
  \bgroup\scshape{}K.~Qian\egroup{}, \bgroup\scshape{}N.~Lei\egroup{},
  \bgroup\scshape{}J.~Zhang\egroup{}, \bgroup\scshape{}M.~Zhang\egroup{}, and
  \bgroup\scshape{}X.~D. Gu\egroup{}, {Area-preserving mesh parameterization
  for poly-annulus surfaces based on optimal mass transportation},
  \emph{Comput. Aided Geom. Des.} \textbf{46} (2016), 76--91.
  \doi{https://doi.org/10.1016/j.cagd.2016.05.005}.
\bibitem[SY24]{Sutti_Yueh:2024}
\bgroup\scshape{}M.~Sutti\egroup{} and \bgroup\scshape{}M.-H. Yueh\egroup{},
  Riemannian gradient descent for spherical area-preserving mappings,
  \emph{AIMS Math.} \textbf{9} no.~7 (2024), 19414--19445.
  \doi{10.3934/math.2024946}.
\bibitem[YC26]{Yao:2026}
\bgroup\scshape{}S.~Yao\egroup{} and \bgroup\scshape{}G.~P. Choi\egroup{},
  Toroidal density-equalizing map for genus-one surfaces,  \emph{J. Comput.
  Appl. Math.} \textbf{472} (2026), 116844.
  \doi{https://doi.org/10.1016/j.cam.2025.116844}.
\bibitem[Yue23]{Yueh:2023}
\bgroup\scshape{}M.-H. Yueh\egroup{}, Theoretical foundation of the stretch
  energy minimization for area-preserving simplicial mappings,  \emph{SIAM J.
  Imaging Sci.} \textbf{16} no.~3 (2023), 1142--1176. \doi{10.1137/22M1505062}.
\bibitem[YLLY20]{Yueh:2020}
\bgroup\scshape{}M.-H. Yueh\egroup{}, \bgroup\scshape{}T.~Li\egroup{},
  \bgroup\scshape{}W.-W. Lin\egroup{}, and \bgroup\scshape{}S.-T. Yau\egroup{},
  A new efficient algorithm for volume-preserving parameterizations of
  genus-one 3-manifolds,  \emph{SIAM J. Imaging Sci.} \textbf{13} no.~3 (2020),
  1536--1564. \doi{10.1137/19M1301096}.
\bibitem[YLWY19]{YLWY:2019}
\bgroup\scshape{}M.-H. Yueh\egroup{}, \bgroup\scshape{}W.-W. Lin\egroup{},
  \bgroup\scshape{}C.-T. Wu\egroup{}, and \bgroup\scshape{}S.-T. Yau\egroup{},
  A novel stretch energy minimization algorithm for equiareal
  parameterizations,  \emph{J. Sci. Comput.} \textbf{78} no.~3 (2019),
  1353--1386. \doi{10.1007/s10915-018-0822-7}.
\bibitem[ZSG{\etalchar{+}}13]{Zhao:2013}
\bgroup\scshape{}X.~Zhao\egroup{}, \bgroup\scshape{}Z.~Su\egroup{},
  \bgroup\scshape{}X.~Gu\egroup{}, \bgroup\scshape{}A.~E. Kaufman\egroup{},
  \bgroup\scshape{}J.~Sun\egroup{}, \bgroup\scshape{}J.~Gao\egroup{}, and
  \bgroup\scshape{}F.~Luo\egroup{}, {Area-Preservation Mapping using Optimal
  Mass Transport},  \emph{IEEE Trans. Vis. Comput. Graph.} \textbf{19} (2013),
  2838--2847.
\end{thebibliography}
\end{small}

%=========================================

\end{sloppypar}
\end{document}